\documentclass[12pt]{amsart}
\usepackage{epsfig,color}
\usepackage{blindtext}
\usepackage{hyperref}
\usepackage{graphicx}
\usepackage{enumitem} 
\usepackage{url}
\usepackage{amssymb}
\usepackage{graphicx,import}
\usepackage{comment}
\usepackage{mathrsfs}

\headheight=6.15pt \textheight=8.15in \textwidth=6.5in
\oddsidemargin=0in \evensidemargin=0in \topmargin=0in

\setcounter{section}{0}
\theoremstyle{definition}

\newtheorem{theorem}{Theorem}[section]
\newtheorem{definition}[theorem]{Definition}

\newtheorem{lemma}[theorem]{Lemma}
\newtheorem{remark}[theorem]{Remark}
\newtheorem{corollary}[theorem]{Corollary}

\newtheorem*{remark*}{Remark}

\numberwithin{equation}{section}

\newcommand{\mb}{\mathbb}

\newcommand{\R}{\mathbb{R}}

\newcommand{\cH}{\mathcal H}

\title[On the Entropy of Parabolic Allen-Cahn Equation]{On the Entropy of Parabolic Allen-Cahn Equation}

\date{\today}

\author{Ao Sun}
\address{Department of Mathematics, University of Chicago
	5734 S. University Avenue, 
	Chicago, IL 60637, USA}
\email{aosun@uchicago.edu}

\begin{document}
\maketitle

\begin{abstract}
	We define a (mean curvature flow) entropy for Radon measures in $\R^n$ or in a compact manifold. Moreover, we prove a monotonicity formula of the entropy of the measures associated with the parabolic Allen-Cahn equations. If the ambient manifold is a compact manifold with non-negative sectional curvature and parallel Ricci curvature, this is a consequence of a new monotonicity formula for the parabolic Allen-Cahn equation. As an application, we show that when the entropy of the initial data is small enough (less than twice of the energy of the one-dimensional standing wave), the limit measure of the parabolic Allen-Cahn equation has unit density for all future time. 
	
	
\end{abstract}

\section{Introduction}
The parabolic Allen-Cahn equation
\begin{equation}
	\frac{\partial}{\partial t}u^\epsilon=\Delta u^\epsilon-\frac{1}{\epsilon^2}f(u^\epsilon)
\end{equation}
was introduced by Allen and Cahn in 1979. It is the gradient flow of the energy functional
\begin{equation}
	M^\epsilon(u)=\int_{\R^n}\frac{\epsilon}{2}|Du|^2+\frac{1}{\epsilon}F(u)dx
\end{equation}
with a speed up factor $1/\epsilon$. Here $F(u)$ is the potential function known to be the ``double well potential", and $f$ is the derivative of the potential function, see Section 2.

The solutions to the Allen-Cahn equation are the models to the motion of phase boundaries by surface tension. When $\epsilon\to 0$, the term $-\frac{1}{\epsilon^2}f(u^\epsilon)$ makes the phase boundry sharp, and the limit should be the motion of surfaces by mean curvature. In \cite{Ilmanen93}, Ilmanen studied the measures $\mu_t^\epsilon$ associated with the solution $u^\epsilon$, which is defined by
\begin{equation}
	d\mu_t^\epsilon=\left(\frac{\epsilon}{2}|Du^\epsilon(\cdot,t)|^2+\frac{1}{\epsilon}F(u^\epsilon(\cdot,t))\right)dx.
\end{equation}

In this paper, we study the (mean curvature flow) entropy of this measure. Entropy in $\R^n$ was introduced by Colding-Minicozzi \cite{Colding-Minicozzi12} in the study of mean curvature flow. Later the author \cite{sun2019entropy} generalized Colding-Minicozzi's idea to define entropy in a manifold. Entropy is a quantity that characterizes a submanifold/measure from all scales. Hypersurfaces with small entropy have been studied in many contexts, see Colding-Ilmanen-Minicozzi-White \cite{CIMW13}, Bernstein-Wang \cite{BW16},\cite{Bernstein-Wang18} and Zhu \cite{Zhu16}.

The monotonicity formula of the entropy of mean curvature flow plays a very important role in the study of mean curvature flow, see \cite{Colding-Minicozzi12}, \cite{sun2019entropy}. The main result of this paper is the following monotonicity formula of the entropy of Allen-Cahn equations. We use $\lambda$ to denote the entropy (see Definition \ref{Def:entropy}), and we use $\xi^\epsilon$ to denote the discrepancy measure (see \eqref{eq:discrepancy}). 

\begin{theorem}\label{thm:monotonicity of entropy-epsilon}
	Suppose $M=\R^n$ or $M$ is a compact manifold with non-negative sectional curvature and parallel Ricci curvature. Suppose $\mu^\epsilon_t$ is the measure associated with the parabolic Allen-Cahn equation on $M$. Suppose $\xi^\epsilon_0\leq0$. For every $0\leq t_1\leq t_2$, we have
	\begin{equation}
		\lambda(\mu^\epsilon_{t_2})\leq\lambda(\mu^\epsilon_{t_1}).
	\end{equation}
\end{theorem}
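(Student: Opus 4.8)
The plan is to deduce the entropy monotonicity from a pointwise monotonicity formula for the Gaussian-weighted mass of $\mu_t^\epsilon$, via the supremum argument of Huisken and Colding--Minicozzi. Write the $(n-1)$-dimensional backward heat kernel
\begin{equation}
\rho_{x_0,t_0}(x,t)=(4\pi(t_0-t))^{-\frac{n-1}{2}}\exp\left(-\frac{|x-x_0|^2}{4(t_0-t)}\right),\qquad t<t_0,
\end{equation}
which in $\R^n$ satisfies $\partial_t\rho+\Delta\rho=-\rho/(2(t_0-t))$. For any center $x_0$ and scale $s>0$ entering the definition of $\lambda(\mu_{t_2}^\epsilon)$, put $t_0=t_2+s$; then $\rho_{x_0,t_0}(\cdot,t_2)$ is exactly the Gaussian of scale $s$ at $x_0$, while $\rho_{x_0,t_0}(\cdot,t_1)$ is a Gaussian of the larger scale $t_0-t_1\ge s$. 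So once $t\mapsto\int\rho_{x_0,t_0}\,d\mu_t^\epsilon$ is known to be non-increasing on $[t_1,t_2]$, we get
\begin{equation}
\int\rho_{x_0,t_0}(\cdot,t_2)\,d\mu_{t_2}^\epsilon\le\int\rho_{x_0,t_0}(\cdot,t_1)\,d\mu_{t_1}^\epsilon\le\lambda(\mu_{t_1}^\epsilon),
\end{equation}
and taking the supremum over $(x_0,s)$ yields $\lambda(\mu_{t_2}^\epsilon)\le\lambda(\mu_{t_1}^\epsilon)$.

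Everything hinges on the monotonicity formula $\frac{d}{dt}\int\rho_{x_0,t_0}\,d\mu_t^\epsilon\le0$. In $\R^n$ I would differentiate, insert $\partial_t u^\epsilon=\Delta u^\epsilon-\epsilon^{-2}f(u^\epsilon)$, and integrate by parts; using the kernel identity together with a Bochner-type rearrangement of $\int\langle D\rho,De^\epsilon\rangle$, the energy terms organize into
\begin{equation}
\frac{d}{dt}\int\rho_{x_0,t_0}\,d\mu_t^\epsilon=-\int\rho_{x_0,t_0}\,\epsilon\left(\partial_t u^\epsilon-\frac{(x-x_0)\cdot Du^\epsilon}{2(t_0-t)}\right)^2 dx+\frac{1}{2(t_0-t)}\int\rho_{x_0,t_0}\,d\xi_t^\epsilon.
\end{equation}
The first term is non-positive; since the coefficient $1/(2(t_0-t))$ of the discrepancy integral is positive and $\rho_{x_0,t_0}>0$, the whole expression is non-positive as soon as $\xi_t^\epsilon\le0$. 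This is exactly why the hypothesis $\xi_0^\epsilon\le0$ is imposed.

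It therefore remains to propagate the sign of the discrepancy: I claim $\xi_0^\epsilon\le0$ forces $\xi_t^\epsilon\le0$ for all $t$. A computation gives the reaction--diffusion equation $\partial_t\xi^\epsilon=\Delta\xi^\epsilon+\epsilon^{-3}f(u^\epsilon)^2-\epsilon|\Hess u^\epsilon|^2-\epsilon\Ric(Du^\epsilon,Du^\epsilon)$. At a first point where $\xi^\epsilon$ reaches $0$ from below the condition $D\xi^\epsilon=0$ says that $Du^\epsilon$ is an eigenvector of $\Hess u^\epsilon$ with eigenvalue $\epsilon^{-2}f(u^\epsilon)$, whence $|\Hess u^\epsilon|^2\ge\epsilon^{-4}f(u^\epsilon)^2$ and the first two reaction terms sum to something $\le0$; the last term is $\le0$ because $\Ric\ge0$ (implied by non-negative sectional curvature, and vacuous on $\R^n$). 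Since also $\Delta\xi^\epsilon\le0$ at such a point, the maximum principle keeps $\{\xi^\epsilon\le0\}$ invariant.

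For a compact manifold the supremum argument is unchanged, but the monotonicity formula must be re-derived, and this is where I expect the main difficulty. The Gaussian must be rebuilt from $\dist(\cdot,x_0)^2$ (likely with a geometric correction factor), so that both the kernel identity $\partial_t\rho+\Delta\rho=-\rho/(2(t_0-t))$ and the integrations by parts leave curvature remainders. Two mechanisms govern these: the Laplacian comparison theorem controls the defect $2n-\Delta(\dist^2)$ through $\Ric\ge0$, while Bochner's formula contributes a term proportional to $-\Ric(Du^\epsilon,Du^\epsilon)$, non-positive again by $\Ric\ge0$. The role of parallel Ricci curvature, $\nabla\Ric=0$, should be to let these curvature terms be carried through the several integrations by parts without generating uncontrolled first-order remainders. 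The crux is to verify that under exactly these two hypotheses all the error terms assemble with the correct sign, so that $\frac{d}{dt}\int\rho\,d\mu_t^\epsilon\le0$ survives; granting this new monotonicity formula, the argument of the first paragraph closes and gives $\lambda(\mu_{t_2}^\epsilon)\le\lambda(\mu_{t_1}^\epsilon)$.
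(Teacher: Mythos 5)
Your treatment of the case $M=\R^n$ is essentially correct and coincides with the paper's route: Ilmanen's weighted monotonicity formula (your displayed identity is exactly his, cited in the paper as Theorem \ref{thm:monotonicity of F-epsilon in R^n}), preservation of $\xi^\epsilon_t\le 0$ (the paper simply cites Ilmanen's Section 4, stated as Theorem \ref{thm:discrepancy negative}; your maximum-principle sketch, with the evolution equation for $\xi^\epsilon$ and the eigenvector observation at a touching point, is a correct outline of it, modulo the usual caveats about the maximum principle on noncompact $\R^n$ and the degenerate case $Du^\epsilon=0$), and finally the supremum over centers and scales using $\rho_{x_0,t_2+s}(\cdot,t_1)=\rho_{x_0,(t_2-t_1)+s}(\cdot,0)$, which is exactly how the paper's Corollary \ref{Cor:monotonicity of entropy-epsilon} extracts entropy monotonicity from the weighted formula.

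The genuine gap is the compact manifold case, which is precisely the paper's new contribution, and there your sketch both stops short and points in a direction that is not known to work. You propose to rebuild the Gaussian from $\dist(\cdot,x_0)^2$ and control the errors by Laplacian comparison and Bochner, but you concede that ``the crux is to verify that \ldots all the error terms assemble with the correct sign'' --- that verification \emph{is} the theorem, and a distance-function Gaussian does not satisfy the backward heat equation, so unsigned remainders persist. The paper instead sets $\rho_{y,s}(x,t)=\sqrt{4\pi(s-t)}\,\cH(x,y,s-t)$ with $\cH$ the \emph{actual} heat kernel of $M$, so the kernel identity $\partial_t\rho_{y,s}=-\Delta\rho_{y,s}-\frac{1}{2(s-t)}\rho_{y,s}$ holds exactly; then, in Ilmanen's general computation for a test function $\phi$, the only term requiring a sign is
\begin{equation*}
-\nu\otimes\nu:D^2\rho_{y,s}+\frac{(\nu\cdot D\rho_{y,s})^2}{\rho_{y,s}}\le\frac{1}{2(s-t)}\rho_{y,s},
\end{equation*}
which is precisely Hamilton's matrix Harnack inequality for the heat kernel,
\begin{equation*}
\nu\otimes\nu:\left(D^2\cH-\frac{D\cH\otimes D\cH}{\cH}+\frac{1}{2(s-t)}\cH g\right)\geq 0,
\end{equation*}
valid on compact manifolds with parallel Ricci curvature and non-negative sectional curvature (and an identity in $\R^n$). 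This also corrects your reading of the hypotheses: parallel Ricci curvature is not there to tame integration-by-parts remainders; together with non-negative sectional curvature it is exactly the hypothesis of Hamilton's Harnack inequality. A further mismatch in your plan: on a manifold the entropy (Definition \ref{Def:entropy}) is defined through the heat kernel, not through distance Gaussians, so even a successful distance-Gaussian monotonicity would not plug into your supremum argument for $\lambda$; the monotone quantity must be the heat-kernel-weighted mass.
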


If we let $\epsilon\to 0$, we obtain a monotonicity formula of entropy for the limit measure.

\begin{theorem}\label{thm:monotonicity of entropy}
	Suppose $M=\R^n$ or $M$ is a compact manifold with non-negative sectional curvature and parallel Ricci curvature. Suppose $\mu^\epsilon_t$ is the measure associated with the parabolic Allen-Cahn equation on $M$, and $\mu_t$ is the limit measure as $\epsilon\to 0$. For every $0<t_1\leq t_2$, we have
	\begin{equation}
		\lambda(\mu_{t_2})\leq\lambda(\mu_{t_1}).
	\end{equation}
\end{theorem}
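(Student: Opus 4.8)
The plan is to derive Theorem~\ref{thm:monotonicity of entropy} as a limiting consequence of Theorem~\ref{thm:monotonicity of entropy-epsilon}, which already establishes the monotonicity at the $\epsilon$-level. The principal tool is the lower-semicontinuity of the entropy functional $\lambda$ under the weak-$*$ convergence of Radon measures $\mu^\epsilon_t \rightharpoonup \mu_t$ as $\epsilon \to 0$. First I would recall from Ilmanen's work \cite{Ilmanen93} that, for a fixed time $t>0$, after passing to a subsequence the measures $\mu^\epsilon_t$ converge weakly-$*$ to the limit measure $\mu_t$, and that this limit is a Brakke flow. The essential structural input is that entropy, being a supremum over all scales and base-points of Gaussian-weighted mass ratios (Definition~\ref{Def:entropy}), is lower-semicontinuous: since each Gaussian density $F_{x_0,r_0}(\mu) = \int \Phi_{x_0,r_0}\,d\mu$ is continuous under weak-$*$ convergence (the kernel $\Phi$ being a fixed bounded continuous function, with the standard care on a compact manifold or with the decay of the Gaussian on $\R^n$), and the entropy is the supremum of these over $(x_0,r_0)$, a supremum of continuous functionals is lower-semicontinuous. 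Hence $\lambda(\mu_t) \le \liminf_{\epsilon \to 0} \lambda(\mu^\epsilon_t)$.

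The second step is to turn the $\epsilon$-level inequality into the limit inequality. Fix $0 < t_1 \le t_2$. By Theorem~\ref{thm:monotonicity of entropy-epsilon}, for each $\epsilon$ (with $\xi^\epsilon_0 \le 0$, which I would note is guaranteed along the relevant sequence of initial data by the standard well-preparedness of initial conditions in \cite{Ilmanen93}), we have $\lambda(\mu^\epsilon_{t_2}) \le \lambda(\mu^\epsilon_{t_1})$. Taking a subsequence along which $\lambda(\mu^\epsilon_{t_1})$ converges to its limit inferior and along which both $\mu^\epsilon_{t_1}$ and $\mu^\epsilon_{t_2}$ converge weakly-$*$, I would chain the inequalities:
\begin{equation}
\lambda(\mu_{t_2}) \le \liminf_{\epsilon \to 0} \lambda(\mu^\epsilon_{t_2}) \le \liminf_{\epsilon \to 0} \lambda(\mu^\epsilon_{t_1}).
\end{equation}
It then remains to bound the right-hand side above by $\lambda(\mu_{t_1})$, i.e.\ to establish an \emph{upper}-semicontinuity statement at the single time $t_1$: $\liminf_{\epsilon \to 0} \lambda(\mu^\epsilon_{t_1}) \le \lambda(\mu_{t_1})$.

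This upper bound is where I expect the main obstacle to lie, because entropy is generally only lower-semicontinuous, and the reverse inequality can fail if mass concentrates or if the supremum defining $\lambda(\mu^\epsilon_{t_1})$ is attained at scales $r_0$ comparable to $\epsilon$ where the diffuse interface has not yet resolved into a sharp hypersurface. The resolution should come from the monotonicity at the $\epsilon$-level itself: rather than comparing entropies directly at $t_1$, I would use the monotonicity formula backward in time. Specifically, since for any $s < t_1$ one has $\lambda(\mu^\epsilon_{t_1}) \le \lambda(\mu^\epsilon_s)$, I can push the base time $s$ toward the initial data, where the energy (and hence the Gaussian densities at all fixed scales) is controlled by the well-prepared initial condition, giving a uniform-in-$\epsilon$ bound; combined with the lower-semicontinuity applied at $t_1$ this pins $\liminf_\epsilon \lambda(\mu^\epsilon_{t_1})$ between $\lambda(\mu_{t_1})$ and the initial bound. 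Alternatively, and more cleanly, I would invoke the fact that the discrepancy assumption $\xi^\epsilon \le 0$ forces the limit measure to be integer-rectifiable with the energy converging to the area (no loss of mass to the discrepancy), so that $\lambda(\mu^\epsilon_{t_1}) \to \lambda(\mu_{t_1})$ as a genuine limit and not merely a one-sided estimate. Making this convergence rigorous — controlling the supremum over base-points and scales uniformly in $\epsilon$, especially near the small-scale regime $r_0 \sim \epsilon$ — is the technical heart of the argument and is precisely where the sign condition on the discrepancy measure must be exploited.
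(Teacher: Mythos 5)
Your strategy---deduce the limit statement from the $\epsilon$-level entropy monotonicity (Theorem \ref{thm:monotonicity of entropy-epsilon}) via semicontinuity---is not the paper's route, and it has a genuine gap exactly where you flag it: the inequality $\liminf_{\epsilon\to 0}\lambda(\mu^\epsilon_{t_1})\le\lambda(\mu_{t_1})$. Entropy is a supremum of functionals continuous under weak-$*$ convergence, hence only \emph{lower} semicontinuous; nothing rules out that the supremum defining $\lambda(\mu^\epsilon_{t_1})$ is attained at base points and scales drifting with $\epsilon$ (e.g.\ $s\sim\epsilon^2$), so the limit of entropies may exceed the entropy of the limit. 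Neither of your proposed fixes closes this: pushing the base time toward $0$ only bounds $\liminf_\epsilon\lambda(\mu^\epsilon_{t_1})$ by the \emph{initial} entropy, which cannot yield $\lambda(\mu_{t_2})\le\lambda(\mu_{t_1})$; and the claimed convergence $\lambda(\mu^\epsilon_{t_1})\to\lambda(\mu_{t_1})$ is asserted, not proved---it is precisely the missing step. A second defect: by invoking Theorem \ref{thm:monotonicity of entropy-epsilon} you need $\xi^\epsilon_0\le 0$, but Theorem \ref{thm:monotonicity of entropy} carries no such hypothesis ("well-preparedness" is not among its assumptions), so even if the gap were closed you would have proved a weaker statement. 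The fact that the limit theorem needs no sign condition on the discrepancy, but does need $t_1>0$, is a hint that the intended argument runs through Soner's estimate (Theorem \ref{thm:discrepancy estimate}), which controls the discrepancy for $t>0$ regardless of initial sign.

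The paper avoids both issues by never comparing entropies at the $\epsilon$ level. It first passes to the limit in the \emph{kernel-wise} monotonicity formula: for each fixed $(y,s)$ with $s>t_2$, Theorems \ref{thm:monotonicity of F in R^n} and \ref{thm:monotonicity of F in M} (whose proofs use Soner's discrepancy estimate and, in the manifold case, Hamilton's Harnack inequality, with a Gronwall error whose factor $K=(\log(1/\epsilon)t_1)^{-1}$ vanishes as $\epsilon\to0$) give
\[
\int\rho_{y,s}(t_2,x)\,d\mu_{t_2}(x)\le\int\rho_{y,s}(t_1,x)\,d\mu_{t_1}(x)\le\lambda(\mu_{t_1}),
\]
where the second inequality uses $\rho_{y,s}(t_1,x)=\rho_{y,s-t_1}(0,x)$ with $s-t_1>0$. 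Taking the supremum over $(y,s)\in M\times(t_2,\infty)$ on the left then yields $\lambda(\mu_{t_2})\le\lambda(\mu_{t_1})$. Only convergence of integrals against a fixed kernel and one-sided (sup-type) operations are used; no upper semicontinuity of entropy is ever needed, and no sign condition on $\xi^\epsilon_0$ enters. If you want to salvage your write-up, replace your steps after lower semicontinuity by this: apply the limit monotonicity formula for each fixed $(y,s)$, bound its right-hand side by $\lambda(\mu_{t_1})$, and only then take the supremum on the left.
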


We remark that there are also monotonicity formulas for local entropy (see Definition \ref{Def:local entropy}), and Theorem \ref{thm:monotonicity of entropy-epsilon} and Theorem \ref{thm:monotonicity of entropy} are just special cases. See Corollary \ref{Cor:monotonicity of entropy-epsilon} and Corollary \ref{Cor:monotonicity of entropy}.

The monotonicity of entropy provides information on the evolutions. In this paper, we focus on the problem of the density of the measures associated with the parabolic Allen-Cahn equations. In \cite{Ilmanen93}, Ilmanen proved that under certain reasonable initial conditions, as $\epsilon\to 0$, $\mu_t^\epsilon$ converge to a rectifiable measure $\mu_t$. See also \cite{ESS},\cite{Soner97}. Moreover $\mu_t$ is a Brakke flow, a geometric measure theoretic weak solution of mean curvature flow. In \cite[Section 13]{Ilmanen93}, Ilmanen asked the following question: when does $\mu_t$ have unit density? Let $\alpha$ be the energy of the $1$-dimensional standing wave (see (\ref{Eq:alpha})). Then unit density means that $\mu_t$ has density $\alpha$ almost everywhere. 

In this paper, we use entropy and local entropy of Radon measures (see Definition \ref{Def:entropy}) to answer this question in a special case. 

\begin{theorem}\label{Thm:low entropy imply unit density}
	Suppose $M=\R^n$ or $M$ is a compact manifold with non-negative sectional curvature and parallel Ricci curvature. Suppose $\mu^\epsilon_t$ are measures associated with the Allen-Cahn equations on $M$. Suppose there exists $\kappa>0$ such that $\lambda^{(0,T)}(\mu^\epsilon_0)<2\alpha-\kappa$, and the discrepancy measure $\xi_0^\epsilon\leq 0$ for a sequence of $\epsilon\to 0$. Then $\mu^\epsilon_t$ has unit density for $t\in(0,T)$. Here $T\in(0,\infty)$ or $T=\infty$. 
\end{theorem}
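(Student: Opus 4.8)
The plan is to propagate the entropy bound forward in time, pass to the limit $\epsilon\to0$, and then convert the surviving entropy bound into a pointwise density bound which, together with the integrality of the limit measure, forces unit density. The key normalization to keep in mind is that $\lambda$ assigns Gaussian density $\theta\alpha$ to a multiplicity-$\theta$ configuration (in the Allen--Cahn mass normalization), so the hypothesis $\lambda^{(0,T)}(\mu_0^\epsilon)<2\alpha-\kappa$ says precisely that no multiplicity-two point may appear at scales in $(0,T)$.

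First I would apply Corollary \ref{Cor:monotonicity of entropy-epsilon}: since $\xi_0^\epsilon\le0$, the local entropy is non-increasing, so $\lambda^{(0,T)}(\mu_t^\epsilon)\le\lambda^{(0,T)}(\mu_0^\epsilon)<2\alpha-\kappa$ for all $t\in[0,T)$ along the given sequence. Next I would invoke Ilmanen's compactness \cite{Ilmanen93}: along a subsequence $\mu_t^\epsilon\to\mu_t$ as Radon measures for a.e.\ $t$, the limit $\mu_t$ is an integral Brakke flow, and $\mu_t/\alpha$ is an integral $(n-1)$-varifold; in particular the $(n-1)$-density $\Theta(\mu_t,x)$ lies in $\{\alpha,2\alpha,3\alpha,\dots\}$ for $\mathcal H^{n-1}$-a.e.\ $x$. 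Because the Gaussian-weighted integrals defining $\lambda^{(0,T)}$ test $\mu$ against bounded continuous (conjugate) heat-kernel weights, they are preserved under Radon convergence, so $\lambda^{(0,T)}(\mu_t)\le\liminf_\epsilon\lambda^{(0,T)}(\mu_t^\epsilon)\le2\alpha-\kappa$ for a.e.\ $t\in(0,T)$.

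The core step is the density comparison. Fix $(x_0,t_0)$ with $t_0\in(0,T)$. Under the stated curvature hypotheses the Huisken-type monotonicity formula holds for the Brakke flow $\mu_t$, so the Gaussian density $\Theta(x_0,t_0)=\lim_{t\uparrow t_0}\int\rho_{(x_0,t_0)}\,d\mu_t$ exists. For $t$ just below $t_0$ the backward scale $t_0-t$ lies in $(0,T)$, so each such Gaussian integral is one of the weights appearing in the supremum defining $\lambda^{(0,T)}(\mu_t)$; taking $t\uparrow t_0$ along the a.e.\ set where the entropy bound holds gives $\Theta(x_0,t_0)\le2\alpha-\kappa$. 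Since the ordinary $(n-1)$-density of $\mu_{t_0}$ at $x_0$ is the small-scale limit of the same monotone quantity, it is dominated by the Gaussian density, so $\Theta(\mu_{t_0},x_0)\le\Theta(x_0,t_0)<2\alpha$ at every point.

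Combining the two facts finishes the argument: for $\mathcal H^{n-1}$-a.e.\ $x$ the density $\Theta(\mu_{t_0},x)$ is simultaneously an integer multiple of $\alpha$ and strictly below $2\alpha$, hence equal to $\alpha$. This is exactly unit density at time $t_0$, and since $t_0\in(0,T)$ was arbitrary the conclusion holds throughout $(0,T)$. The main obstacle I anticipate is the density-comparison step: reconciling the scale-restricted entropy $\lambda^{(0,T)}$ with the Gaussian density of the limiting flow in the curved setting, keeping all relevant scales inside $(0,T)$ so that the entropy definition and the monotonicity formula genuinely line up. A secondary but essential point is the correct use of integrality of the limit measure from \cite{Ilmanen93}, on which the final dichotomy entirely rests.
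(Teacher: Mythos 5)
Your proposal is correct in substance and follows the same skeleton as the paper: propagate the local-entropy bound forward in time, bound the density of the limit measure by the local entropy, and invoke integrality of $\alpha^{-1}\mu_t$ to force density exactly $\alpha$. There are, however, three points worth comparing. First, a bookkeeping slip: Corollary \ref{Cor:monotonicity of entropy-epsilon} does \emph{not} give $\lambda^{(0,T)}(\mu_t^\epsilon)\le\lambda^{(0,T)}(\mu_0^\epsilon)$; because scales translate in time, it gives $\lambda^{(0,T)}(\mu_{t}^\epsilon)\le\lambda^{(0,T+t)}(\mu_0^\epsilon)$, and the hypothesis only controls the window $(0,T)$ at time $0$. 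The correct deduction is $\lambda^{(0,T-t)}(\mu_t^\epsilon)\le\lambda^{(0,T)}(\mu_0^\epsilon)<2\alpha-\kappa$ for $t<T$ (this is exactly how the paper states it), and your argument survives since the density bound at time $t_0<T$ only needs arbitrarily small backward scales, which lie in $(0,T-t_0)$. Second, your route through the $\epsilon$-level monotonicity (which genuinely uses $\xi_0^\epsilon\le0$) followed by lower semicontinuity of the local entropy is in fact a more careful use of the hypotheses than the paper's proof, which applies the limit-level Corollary \ref{Cor:monotonicity of entropy} from $t_1=0$ even though that corollary is only stated for $t_1>0$; your version is the one that literally needs the discrepancy hypothesis. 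Third, your density-comparison step is heavier than necessary: the paper's Lemma \ref{Lem:density is bounded by entropy} bounds the $(n-1)$-density of any integral rectifiable measure by its local entropy directly via approximate tangent planes, with no appeal to Huisken's monotonicity or Gaussian densities of the Brakke flow, whereas your argument additionally requires the monotonicity formula for the limiting flow in the curved setting. Finally, the integrality of $\alpha^{-1}\mu_t$ on which the dichotomy rests is due to Tonegawa \cite{Tonegawa03}, not Ilmanen \cite{Ilmanen93}, who proved rectifiability; this attribution matters since the whole proof hinges on it.
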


In the above theorem, we need a technical assumption that the discrepancy measure is non-positive. This assumption is used by Ilmanen in \cite{Ilmanen93}. Even we do not require the discrepancy measure to be non-positive, we can still use an argument by Soner \cite{Soner97} to show that the limit measure has unit density.

\begin{theorem}\label{Thm:Main Theorem}
	Suppose $M=\R^n$ or $M$ is a compact manifold with non-negative sectional curvature and parallel Ricci curvature. Suppose $\mu^\epsilon_t$ are measures associated with the Allen-Cahn equations on $M$, and $\mu_t$ is a limit measure as $\epsilon\to 0$. Suppose there exists $\kappa>0,\epsilon_0>0$ and $\delta>0$ such that the entropy $\lambda(\mu_\delta^\epsilon)<2\alpha-\kappa$, or the local entropy $\lambda^{(0,T)}(\mu_\delta^\epsilon)<2\alpha-\kappa$, for $\epsilon<\epsilon_0$. Then the limit measure $\mu_t$ has unit density for $t\geq \delta$, i.e. the density of $\mu_t$ is $\alpha$ almost everywhere.
\end{theorem}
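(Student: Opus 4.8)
The plan is to decouple this statement from the discrepancy hypothesis of Theorem~\ref{Thm:low entropy imply unit density} by working directly with the limit measure and invoking the entropy monotonicity for $\mu_t$ in Theorem~\ref{thm:monotonicity of entropy} (and, in the local case, Corollary~\ref{Cor:monotonicity of entropy}). The role of the strictly positive time $\delta$ is precisely that Theorem~\ref{thm:monotonicity of entropy} holds only on $0<t_1\le t_2$: once we have a good entropy bound at some positive time, the discrepancy of the limit has already been absorbed into the passage $\epsilon\to 0$, so no assumption $\xi^\epsilon_0\le 0$ is needed.

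First I would transfer the entropy bound to the limit. Because each Gaussian-weighted integral defining $\lambda$ (and $\lambda^{(0,T)}$) is, for fixed center and scale, continuous under the weak-$*$ convergence $\mu^\epsilon_\delta\rightharpoonup\mu_\delta$ — here using the uniform mass bound coming from the bounded initial energy — the entropy is a supremum of continuous functionals and hence lower semicontinuous. Thus
\begin{equation*}
\lambda(\mu_\delta)\le\liminf_{\epsilon\to 0}\lambda(\mu^\epsilon_\delta)\le 2\alpha-\kappa,
\end{equation*}
and likewise for $\lambda^{(0,T)}$. Applying Theorem~\ref{thm:monotonicity of entropy} (or Corollary~\ref{Cor:monotonicity of entropy}) on $[\delta,t]$ then propagates the bound forward:
\begin{equation*}
\lambda(\mu_t)\le\lambda(\mu_\delta)\le 2\alpha-\kappa<2\alpha\qquad\text{for every }t\ge\delta.
\end{equation*}

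Next I would run the density argument. By Ilmanen's theorem \cite{Ilmanen93}, for $t>0$ the limit $\mu_t$ is an integral $(n-1)$-varifold with density in $\alpha\,\mathbb{Z}_{\ge 0}$, and $\{\mu_t\}$ satisfies Huisken's monotonicity formula. Fix $t_0\ge\delta$ and $x_0\in\spt\mu_{t_0}$, and let $\rho_{(x_0,t_0)}$ denote the backward heat kernel centered at $(x_0,t_0)$. For each $t<t_0$ the quantity $\int\rho_{(x_0,t_0)}(\cdot,t)\,d\mu_t$ is a Gaussian integral of $\mu_t$, hence at most $\lambda(\mu_t)\le\lambda(\mu_\delta)$; by the monotonicity formula it decreases to the parabolic Gaussian density $\Theta(x_0,t_0)$ as $t\to t_0^-$, so $\Theta(x_0,t_0)<2\alpha$. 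Since $\mu_t$ is integral, at $\mathcal H^{n-1}$-almost every $x_0$ in the support the blow-up of the flow at $(x_0,t_0)$ is a static multiplicity-$\theta(x_0)$ hyperplane, whose Gaussian density equals $\theta(x_0)\,\alpha$; comparing with $\Theta(x_0,t_0)<2\alpha$ forces $\theta(x_0)=1$, i.e. $\mu_{t_0}$ has unit density.

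The hard part will be the last step: showing that the single-time entropy of Definition~\ref{Def:entropy} genuinely controls the parabolic Gaussian density $\Theta$ of the Brakke flow, and that the resulting strict bound $\Theta<2\alpha$ forces multiplicity one at $\mathcal H^{n-1}$-almost every point of the support. This is exactly where the threshold $2\alpha$ is sharp — a static multiplicity-two plane has Gaussian density precisely $2\alpha$ — and it leans on the full strength of the rectifiability and integrality of $\mu_t$ together with the almost-everywhere description of tangent flows as static planes. A secondary, purely technical, point is to ensure that $\delta$ can be taken to be a time at which $\mu^\epsilon_\delta\rightharpoonup\mu_\delta$ along the chosen subsequence, so that the hypothesis transfers to the limit; this is available for almost every $\delta$, and the remaining times follow from the Brakke flow structure of $\{\mu_t\}$.
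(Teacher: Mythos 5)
Your first half is exactly the paper's argument: transfer the bound to the limit by lower semicontinuity of (local) entropy under weak-$*$ convergence, then propagate it forward in time using Theorem \ref{thm:monotonicity of entropy} / Corollary \ref{Cor:monotonicity of entropy}, which for the \emph{limit} measure needs no hypothesis $\xi^\epsilon_0\le 0$ precisely because $\delta>0$. Indeed, your explanation of why the discrepancy hypothesis can be dropped is more explicit than the paper's own proof, which formally cites Theorem \ref{Thm:low entropy imply unit density} but really only uses the part of its proof (Corollary \ref{Cor:monotonicity of entropy} applied from a positive time) that is unconditional.

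Where you diverge is the density step, and there your route is both heavier than necessary and not fully justified at the point you yourself flag as ``the hard part.'' The paper never introduces the parabolic Gaussian density $\Theta(x_0,t_0)$ or tangent flows: it applies the entropy monotonicity up to the time $t_0$ itself, so that a local entropy of $\mu_{t_0}$ is below $2\alpha$, and then invokes the purely static Lemma \ref{Lem:density is bounded by entropy} --- for an integral rectifiable measure, $\theta(x_0)$ is the small-scale limit of Gaussian integrals centered at $x_0$ \emph{at that single time}, hence $\theta(x_0)\le\lambda^{(0,T')}(\mu_{t_0})<2\alpha$ --- and finally Tonegawa's integrality theorem \cite{Tonegawa03} (density in $\alpha\,\mathbb{Z}$; note integrality is due to Tonegawa, not Ilmanen) forces $\theta\equiv\alpha$. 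Your route instead bounds $\Theta(x_0,t_0)$ via Huisken monotonicity and then needs $\theta(x_0)\,\alpha\le\Theta(x_0,t_0)$. The mechanism you propose for this --- that at $\mathcal{H}^{n-1}$-a.e.\ point of an \emph{arbitrary} time slice the backward blow-up is a static plane of multiplicity exactly $\theta(x_0)$ --- is not true for general Brakke flows: sudden mass drop at time $t_0$ is allowed, in which case the backward Gaussian density strictly exceeds the slice density and the blow-up has higher multiplicity. Only the inequality $\theta(x_0)\,\alpha\le\Theta(x_0,t_0)$ survives (which is the direction you need), but proving it requires the Brakke semicontinuity of mass in time, a shifted-pole limiting argument, and, in the end, the same static tangent-plane computation that constitutes Lemma \ref{Lem:density is bounded by entropy}. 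So the gap you identify dissolves entirely if you apply your own step-two monotonicity up to time $t_0$ and then quote Lemma \ref{Lem:density is bounded by entropy} directly, which is what the paper does; as written, your proposal trades a one-line static lemma for unproven (and partly incorrect as stated) Brakke-flow structure theory.
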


Bronsard-Stoth \cite{Bronsard-Stoth96} constructed examples such that the limits do not have unit density. The limit of the initial data constructed by Bronsard-Stoth has density $2\alpha$, this implies that the entropy of their initial data is at least $2\alpha$ (see Lemma \ref{Lem:density is bounded by entropy}). Thus our theorem is sharp in some sense. We highlight that our theorems provide long time information, i.e. they hold for arbitrarily large time $t$.

There is a connection between the unit density problem of the limit of the Allen-Cahn equation and the unit density problem of Brakke flow. In \cite[Appendix E]{ilmanen1994elliptic}, Ilmanen asked that can we show the unit density is preserved for the Brakke flow starting from the boundaries of sets. Ilmanen also constructed an example called ``double spoon" to illustrate that in general unit density may fail under Brakke flow, even when the initial measure has unit density. Thus the assumption that the Brakke flow starts from the boundaries of sets is necessary.

It is not too hard to construct a motion of the boundaries of sets by the limit of the Allen-Cahn equation, see \cite[Section 1.4]{Ilmanen93}. In particular, the double spoon example can not be obtained by the Allen-Cahn equation. So the unit density conjecture on Brakke flow is true if the unit density problem for the limit of the Allen-Cahn equation is true. As a special case, our result implies that for a Brakke flow, if the entropy of the initial measure is small (less than $2$), the unit density is preserved under the flow. 

\subsection*{Idea of proof} The proof relies on monotonicity formulas of Allen-Cahn equation (see Theorem \ref{thm:monotonicity of F-epsilon in M} and Theorem \ref{thm:monotonicity of F in M}). If the ambient manifold is $\R^n$, then the monotonicity formula has been proved by Ilmanen \cite{Ilmanen93} and Soner \cite{Soner97}, and similar results for the non-scalar case (correspondence to higher codimensional mean curvature flow) was studied by Ambrosio-Soner in \cite{AS98} and Jerrard-Soner in \cite{JS02}. We refer the readers to \cite{Soner95} for an overview discussion.

In this paper, we generalize the monotonicity formula to a compact manifold $M$ with non-negative sectional curvature and parallel Ricci curvature. The proof follows the calculations by Ilmanen and Soner, with an extra term appearing because the ambient space $M$ is not $\R^n$. In order to handle this extra term, we need an idea by Hamilton. In \cite{Hamilton93M}, Hamilton proved a monotonicity formula for mean curvature flow in a compact manifold $M$. Similarly, because the ambient space $M$ is not $\R^n$, there is an extra term, and Hamilton used a Harnack inequality \cite{Hamilton93H} to bound this term. In our case, we can also use Hamilton's Harnack inequality to bound the extra term.

The curvature assumption on $M$ is necessary for Hamilton's Harnack inequality. Without the curvature assumption, we can only obtain a Gronwall type inequality rather than a monotonicity formula, and the entropy may become very large as $t\to\infty$. Thus the curvature assumption is necessary if we want to study long time behaviour. 

We want to remark that Pisante-Punzo \cite{Pisante-Punzo} proved a Huisken's type of inequality for parabolic Allen-Cahn equations in a manifold, without a specific curvature assumption. Their inequality was motivated by Ecker's local monotonicity formula, see \cite{Ecker}. Nevertheless, their inequality is a Gronwall type inequality, so it can not be used to study long time behaviours.

\section{Preliminaries on Allen-Cahn Equation}\label{S:Preliminaries}

Let $u^\epsilon$ be the unique smooth solutions of the equation
\begin{equation}
	\begin{split}
		\frac{\partial}{\partial t}u^\epsilon=\Delta u^\epsilon-\frac{1}{\epsilon^2}f(u^\epsilon)&\text{ on $\R^n\times[0,\infty)$}\\
		u^\epsilon(\cdot,0)=u_0^\epsilon(\cdot)& \text{ on $\R^n\times\{0\}$}.
	\end{split}
\end{equation}
where $u_0^\epsilon$ is the initial data. The potential function $F:\R\to\R$ satisfies
\[f=F',\quad F=\frac{1}{2}g^2,\]
where 
\begin{equation}\label{Eq:potential function}
	\left\{\begin{array}{l}
		f(-1)=f(0)=f(1)=0,\\
		f>0 \text{ on } (-1,0),\ \ f<0 \text{ on } (0,1),\\
		f'(-1)>0,\ \ f'(1)>0,\ \ f'(0)<0\\
		g(-1)=g(1)=0,\ \ g>0\text{ on } (-1,1).
	\end{array}\right.
\end{equation}

In this paper, we will assume $F$, $f$ and $g$ are the functions in the standard model 
\begin{equation}
	F(u)=\frac{1}{2}(1-u^2)^2,\quad f(u)=2u(u^2-1),\quad g(u)=1-u^2.
\end{equation}
For general $F$, $f$ and $g$ satisfying \eqref{Eq:potential function}, all the discussions in this paper still hold if we assume one extra assumption on $F$ (see \eqref{eq:x^2FleqC}). We will discuss this assumption in Remark \ref{rmk:generalF}.

The one-dimensional standing wave $q^\epsilon$ is defined to be the solution to
\begin{equation}
	q^\epsilon_{xx}-\frac{1}{\epsilon^2}f(q^\epsilon(x))=0,\quad x\in\R
\end{equation}
with the assumption that $q_x^\epsilon>0$, $q^\epsilon(\pm\infty)=\pm1$ and $q^\epsilon(0)=0$. We can solve this ODE by solving the first order ODE
\begin{equation}
	q^\epsilon_{x}-\frac{1}{\epsilon}g(q^\epsilon(x))=0,\quad x\in\R
\end{equation} 
with the assumption that $q_x^\epsilon>0$, $q^\epsilon(\pm\infty)=\pm1$ and $q^\epsilon(0)=0$.

By ODE theory we can solve $q^\epsilon$ for any potential function satisfying (\ref{Eq:potential function}). We define $\alpha$ to be its energy
\begin{equation}\label{Eq:alpha}
	\alpha=\int_{-\infty}^\infty\frac{\epsilon}{2}(q_x^\epsilon(x))^2+\frac{1}{\epsilon}F(q^\epsilon(x)) dx.
\end{equation}
By change of variable we know that $\alpha$ is independent of $\epsilon$. One can also check that
\[\alpha=\int_{-1}^1 \sqrt{F(s)/2}ds.\]

In particular, for the model case $F(u)=\frac{1}{2}(1-u^2)^2$, we have $q^\epsilon(x)=\tanh(x/\epsilon)$, $\alpha=4/3$.

We define the Radon measure $\mu_t^\epsilon$ by 
\begin{equation}
	d\mu_t^\epsilon=\left(\frac{\epsilon}{2}|Du^\epsilon(\cdot,t)|^2+\frac{1}{\epsilon}F(u^\epsilon(\cdot,t))\right)dx.
\end{equation}
We say $\mu_t^\epsilon$ is associated with the solution to the Allen-Cahn equation, or for the sake of brevity say $\mu_t^\epsilon$ is associated with the Allen-Cahn equation.

Ilmanen \cite{Ilmanen93} proved that, under certain technical requirement, there exists $\epsilon_i\to 0$ such that $\mu_t^{\epsilon_i}$ converge to a $(n-1)$-rectifiable Radon measure $\mu_t$ for a.e. $t>0$. Moreover, $\mu_t$ is a mean curvature flow in the sense of Brakke. 

One motivation of Ilmanen to study the limit behavior of the Allen-Cahn equation was to understand the weak mean curvature flow, see \cite[Section 12]{Ilmanen93}. Thus Ilmanen had some technical requirement on the initial data $\mu_0^\epsilon$, see \cite[p.423]{Ilmanen93}. However, uniformly bounded total energy $\int d\mu_t^\epsilon$ is enough to give a convergence subsequence, see \cite{Soner97}. \cite{Soner97} even proved that a weaker assumption is enough to yield a convergence subsequence. In our case we only require the initial data have uniform small entropy. This can be viewed as a restriction on (iv) on \cite[p.423]{Ilmanen93}, cf. Lemma \ref{Lem:equivalence of entropy and area growth}.

\bigskip

Next we discuss the \emph{discrepancy measure $\xi_t$}, which is defined to be 
\begin{equation}\label{eq:discrepancy}
	d\xi_t^\epsilon=\left(\frac{\epsilon}{2}|Du^\epsilon(\cdot,t)|^2-\frac{1}{\epsilon}F(u^\epsilon(\cdot,t))\right)dx.
\end{equation}

If we define $r^\epsilon$ to be the function satisfying
\begin{equation}
	u^\epsilon=q^\epsilon(r^\epsilon),
\end{equation}
then $r^\epsilon$ satisfies the equation
\begin{equation}\label{eq:evolution of r}
	\frac{\partial}{\partial t}r^\epsilon=\Delta r_\epsilon+\frac{2g'}{\epsilon}(|D r^\epsilon|^2-1).
\end{equation}
We can check
\begin{equation}
	|Dr^\epsilon|^2 = \frac{(\epsilon/2)|Du^\epsilon|^2}{(1/\epsilon)F(u^\epsilon)}.
\end{equation}
This implies that $|Dr^\epsilon|^2-1$ carries information of $\xi_t$. In particular, $|Dr^\epsilon|\leq 1$ implies that $\xi_t\leq 0$. 

By using \eqref{eq:evolution of r} and the equation of $|Dr^\epsilon|^2$, Ilmanen and Soner obtained some estimates on $|D r^\epsilon|^2$. As a result, they proved certain bounds on $\xi_t$. We refer the readers to \cite[Section 4]{Ilmanen93} \cite[Appendix]{Soner97} for detailed discussions. Here we only state their results which we will use later. We also remark that although their results were proved for the Allen-Cahn equations in $\R^n$, the proofs are also valid for the Allen-Cahn equations in a closed manifold (in fact the maximum principle is even easier in this case because the domain is compact). 

\begin{theorem}[\cite{Ilmanen93}, Section 4]\label{thm:discrepancy negative}
	If $\xi_0\leq 0$, then $\xi_t\leq 0$ for all $t>0$.
\end{theorem}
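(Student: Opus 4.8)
The plan is to translate the assertion $\xi_t^\epsilon\le 0$ into a scalar preservation statement to which a maximum principle applies. Since $d\xi_t^\epsilon=\left(\frac{\epsilon}{2}|Du^\epsilon|^2-\frac{1}{\epsilon}F(u^\epsilon)\right)dx$ and $|Dr^\epsilon|^2=\frac{(\epsilon/2)|Du^\epsilon|^2}{(1/\epsilon)F(u^\epsilon)}$, the sign of $\xi_t^\epsilon$ is governed pointwise by $w^\epsilon:=|Dr^\epsilon|^2-1$: one has $\xi_t^\epsilon\le 0$ exactly when $w^\epsilon\le 0$. Thus it suffices to show that $w^\epsilon_0\le 0$ implies $w^\epsilon_t\le 0$ for all $t>0$, i.e. that the set $\{w^\epsilon\le 0\}$ is preserved under the flow.

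Next I would derive the evolution equation for $w^\epsilon$ from \eqref{eq:evolution of r}. Writing $b=\frac{2g'}{\epsilon}$ (a function of $r^\epsilon$ through $u^\epsilon=q^\epsilon(r^\epsilon)$), differentiating $\partial_t r^\epsilon=\Delta r^\epsilon+b\,w^\epsilon$ and pairing with $2Dr^\epsilon$ gives
\begin{equation*}
\partial_t w^\epsilon = 2\langle D\Delta r^\epsilon, Dr^\epsilon\rangle + 2b\langle Dw^\epsilon, Dr^\epsilon\rangle + 2b'(r^\epsilon)\,w^\epsilon(w^\epsilon+1).
\end{equation*}
Applying the Bochner formula $\langle D\Delta r^\epsilon, Dr^\epsilon\rangle=\frac{1}{2}\Delta|Dr^\epsilon|^2-|\Hess r^\epsilon|^2-\Ric(Dr^\epsilon,Dr^\epsilon)$ together with $\Delta|Dr^\epsilon|^2=\Delta w^\epsilon$ then yields
\begin{equation*}
\partial_t w^\epsilon = \Delta w^\epsilon + 2b\langle Dw^\epsilon, Dr^\epsilon\rangle + 2b'(r^\epsilon)\,w^\epsilon(w^\epsilon+1) - 2|\Hess r^\epsilon|^2 - 2\Ric(Dr^\epsilon,Dr^\epsilon).
\end{equation*}

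The key observation is that every potentially positive contribution on the right is controlled. The term $-2|\Hess r^\epsilon|^2$ is non-positive; under the non-negative curvature hypothesis the term $-2\Ric(Dr^\epsilon,Dr^\epsilon)$ is non-positive as well (on $\R^n$ it vanishes); and the reaction term is of the form $c\,w^\epsilon$ with $c=2b'(r^\epsilon)(w^\epsilon+1)$, where $b'(r^\epsilon)=\frac{2}{\epsilon^2}g''(u^\epsilon)g(u^\epsilon)$ is bounded (for the model potential $b'\le 0$, which renders this term favorable whenever $w^\epsilon>0$). Discarding the non-positive terms I obtain the linear parabolic differential inequality
\begin{equation*}
\partial_t w^\epsilon \le \Delta w^\epsilon + V\cdot Dw^\epsilon + c\,w^\epsilon, \qquad V=2b\,Dr^\epsilon,
\end{equation*}
with bounded coefficients, and the standard maximum principle (e.g. comparing $e^{-\Lambda t}w^\epsilon$ with $\Lambda=\sup c$) shows that $w^\epsilon_0\le 0$ is preserved for all $t>0$.

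The main obstacle is making the maximum principle rigorous on the non-compact domain $\R^n$: one must rule out the escape of a positive supremum to spatial infinity, which requires a priori bounds on $w^\epsilon$ and on $b,b',Dr^\epsilon$, precisely the gradient estimates established in \cite[Section 4]{Ilmanen93} and \cite[Appendix]{Soner97}. On a compact manifold this difficulty disappears, since a smooth function on $M\times[0,T]$ attains its maximum and the first-time-positive-maximum argument applies directly; this is the sense in which the closed case is easier. The curvature hypothesis enters only through the sign of the Bochner term $-2\Ric(Dr^\epsilon,Dr^\epsilon)$: without $\Ric\ge 0$ one would acquire a contribution comparable to $|Dr^\epsilon|^2=w^\epsilon+1$, whose constant part obstructs preservation of $\{w^\epsilon\le 0\}$ and forces instead a Gronwall-type estimate rather than the sharp sign statement.
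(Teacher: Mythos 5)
Your proposal is correct and follows essentially the same route as the proof the paper invokes: the paper cites Ilmanen \cite[Section 4]{Ilmanen93} (with the remark that the maximum principle argument carries over to closed manifolds, where it is in fact easier), and that argument is precisely your reduction to preservation of $|Dr^\epsilon|^2-1\leq 0$ via the evolution equation \eqref{eq:evolution of r}, the Bochner identity, and the parabolic maximum principle. Your closing observation that the Ricci term is what forces the curvature hypothesis in the manifold case, and that non-compactness of $\R^n$ is the only genuine technical obstacle, matches the paper's own framing of these points.
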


\begin{theorem}[\cite{Soner97}, Proposition 4.1]\label{thm:discrepancy estimate}
	Assume $|u^\epsilon(\cdot,0)|\leq 1$. Then there exists $0<\epsilon_0<1$ such that for any $\epsilon\leq \epsilon_0$ we have
	\begin{equation}
		|Dr^\epsilon(x,t)|^2\leq 1+\frac{2}{\log(1/\epsilon)}\frac{(\epsilon r^\epsilon(x,t))^2+1}{t}.
	\end{equation}
	for $(x,t)\in M\times(0,\infty)$.
\end{theorem}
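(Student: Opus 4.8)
The statement is a pointwise gradient bound on $r^\epsilon$, so the natural plan is to run a parabolic maximum principle for $w:=|Dr^\epsilon|^2$ against the explicit barrier
\[
\phi:=1+\frac{2}{\log(1/\epsilon)}\,\frac{(\epsilon r^\epsilon)^2+1}{t}.
\]
First I would record that the hypothesis $|u^\epsilon(\cdot,0)|\leq 1$ propagates: since the reaction $-\frac{1}{\epsilon^2}f(u^\epsilon)$ points inward at $u=\pm1$, the parabolic maximum principle gives $|u^\epsilon|\leq 1$ for all $t$. Because $q^\epsilon\colon\R\to(-1,1)$ is an increasing bijection, $r^\epsilon$ is then well defined by $u^\epsilon=q^\epsilon(r^\epsilon)$, and the target inequality is exactly $w\leq\phi$. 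Note also that $u^\epsilon$ and $r^\epsilon$ always share the same sign (as $q^\epsilon$ is odd and increasing), a fact I will use below.

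Next I would compute the evolution of $w$. Differentiating \eqref{eq:evolution of r} and applying the Bochner formula $\tfrac12\Delta|Dr^\epsilon|^2=|\Hess r^\epsilon|^2+\langle Dr^\epsilon,D\Delta r^\epsilon\rangle+\Ric(Dr^\epsilon,Dr^\epsilon)$ yields, with $a:=\tfrac{2g'(u^\epsilon)}{\epsilon}$ and $a'=\tfrac{2g''(u^\epsilon)g(u^\epsilon)}{\epsilon^2}$,
\[
\partial_t w=\Delta w+2a\,\langle Dr^\epsilon,Dw\rangle+2a'\,w(w-1)-2|\Hess r^\epsilon|^2-2\Ric(Dr^\epsilon,Dr^\epsilon).
\]
The sign structure is favorable: $g''\leq 0$ and $g\geq 0$ give $a'\leq 0$, so the reaction $2a'w(w-1)$ is non-positive exactly in the regime $w>1$ that we must control; the Hessian term is non-positive; and under the curvature hypothesis $\Ric\geq 0$ (vanishing on $\R^n$) so is the Ricci term. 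Discarding the last two non-positive terms I obtain the differential inequality
\[
\partial_t w\leq \Delta w+2a\,\langle Dr^\epsilon,Dw\rangle+2a'\,w(w-1).
\]

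Then I would compare $w$ with $\phi$. Since $\phi\to+\infty$ as $t\to0^+$ while $w$ is bounded on $M\times(0,t_2]$ by parabolic regularity (on noncompact $\R^n$ one additionally controls behavior at spatial infinity, using that the barrier itself grows, or a localization), we have $w<\phi$ for small $t$. If $w\leq\phi$ failed there would be a first touching point $(x_0,t_0)$ with $t_0>0$, where $w=\phi$, $Dw=D\phi$, $\Delta(w-\phi)\leq 0$ and $\partial_t(w-\phi)\geq 0$. Substituting the inequality above and evaluating $\partial_t\phi$, $\Delta\phi$, $\langle Dr^\epsilon,D\phi\rangle$ via \eqref{eq:evolution of r}, the $\langle Dr^\epsilon,\Delta r^\epsilon\rangle$-type contributions cancel between $\partial_t\phi$ and $\Delta\phi$. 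What remains pits the \emph{bad} terms $\tfrac{\phi-1}{t}$ and an $O(\epsilon^2\phi)$ term against two \emph{good} terms: the reaction $\tfrac{2g''g}{\epsilon^2}\phi(\phi-1)\leq 0$ and a drift contribution proportional to $-\tfrac{\epsilon\,u^\epsilon r^\epsilon}{t}(\phi+1)$, which is non-positive precisely because $u^\epsilon r^\epsilon\geq 0$. Showing the good terms dominate for $\epsilon$ small then forces $\partial_t(w-\phi)<0$ at $(x_0,t_0)$, a contradiction, giving $w\leq\phi$.

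I expect the main obstacle to be verifying this supersolution inequality uniformly in the \emph{tail region} where $u^\epsilon\to\pm1$, equivalently $|r^\epsilon|$ large. There the restoring coefficient $g(u^\epsilon)=1-(u^\epsilon)^2$ is exponentially small in $|r^\epsilon|/\epsilon$, so the reaction term alone cannot absorb $\tfrac{\phi-1}{t}$; one must let the favorable drift term (powered by $u^\epsilon r^\epsilon\geq 0$) cover the intermediate range while the reaction covers the core, and show the two regimes overlap across the whole range as $\epsilon\to 0$. This matching is exactly what dictates the growth $(\epsilon r^\epsilon)^2$ and the precise coefficient $\tfrac{2}{\log(1/\epsilon)}$: the logarithm supplies the slack needed to bridge the exponentially weak restoring force against the polynomially growing barrier. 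Carrying out this balance quantitatively, rather than the Bochner computation or the comparison bookkeeping, is the delicate heart of the argument.
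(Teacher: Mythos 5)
First, a structural remark: the paper itself contains no proof of this statement---it is quoted verbatim from Soner (Proposition 4.1), with only the remark that the argument carries over to closed manifolds---so your proposal can only be measured against the known proof, not against anything internal to the paper. Your preliminaries are correct and are indeed the standard starting point: propagation of $|u^\epsilon|\leq 1$, the Bochner evolution of $w=|Dr^\epsilon|^2$ with reaction coefficient $a'=\tfrac{2g''g}{\epsilon^2}\leq 0$, and the sign fact $u^\epsilon r^\epsilon\geq 0$. The gap is exactly the step you defer to the end: the fixed barrier $\phi=1+\tfrac{2}{\log(1/\epsilon)}\tfrac{(\epsilon r^\epsilon)^2+1}{t}$ is \emph{not} a supersolution in the intermediate region, so the first-touching-point argument collapses. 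Write $K=\tfrac{2}{\log(1/\epsilon)}$ and carry out your own bookkeeping at a touching point: after the $r\Delta r$ cancellation, the required strict inequality is
\begin{equation*}
\frac{cK\epsilon\, u^\epsilon r^\epsilon(\phi+1)}{t}+\frac{c'g(u^\epsilon)}{\epsilon^2}\,\phi(\phi-1)
\;>\;
\frac{K\left((\epsilon r^\epsilon)^2+1\right)}{t^2}+\frac{2K\epsilon^2\phi}{t}
\end{equation*}
for absolute constants $c,c'$. Every term carries the factor $K$ (the right-hand side is $\tfrac{\phi-1}{t}$ plus lower order, the drift is linear in $K$, and the reaction contains $\phi-1\propto K$), so the logarithm cancels identically and supplies no slack at all. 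Now test the inequality at a hypothetical touching point with $r^\epsilon=1$, $t=1$: there $u^\epsilon\approx 1$, $g(u^\epsilon)\approx 4e^{-2/\epsilon}$, $\phi\approx 1+K$, and after dividing by $K$ the inequality reads, up to constants, $\epsilon+\epsilon^{-2}e^{-2/\epsilon}>1$, which is false for all small $\epsilon$. Quantitatively, the reaction term dominates only where $|r^\epsilon|\lesssim\epsilon\log(1/\epsilon)$, the drift term only where $|r^\epsilon|\gtrsim(\epsilon t)^{-1}$; contrary to your claim, these regimes do not overlap as $\epsilon\to 0$---the gap between them is enormous.

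The missing idea is that the $1/t$ decay cannot be prescribed in advance by a pointwise barrier: as the computation above shows, a barrier of this shape would need coefficient of order $1/\epsilon$ rather than $2/\log(1/\epsilon)$ to be a supersolution, so no adjustment of constants repairs your argument. Soner's proof instead applies the maximum principle to a ratio quantity, roughly $\psi=\dfrac{(|Dr^\epsilon|^2-1)^+}{1+(\epsilon r^\epsilon)^2}$, and derives a differential inequality for $\max_x\psi(\cdot,t)$: at an interior maximum the condition $D\psi=0$ ties $Dw$ to $Dr^\epsilon$, and---crucially---the term $-2|\Hess r^\epsilon|^2$, which you discard at the outset, is retained and produces the quadratic (Riccati-type) damping from which the $1/t$ decay and the $\log(1/\epsilon)$ threshold for $\epsilon_0$ are \emph{derived} rather than imposed. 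A secondary soft spot in your write-up: the claim that $w$ is bounded on $M\times(0,t_2]$ ``by parabolic regularity'' is itself delicate, since $|Dr^\epsilon|=\epsilon|Du^\epsilon|/g(u^\epsilon)$ degenerates wherever $u^\epsilon\to\pm 1$; but this is minor compared with the failure of the supersolution property.
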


We remark that In \cite{Soner97}, Soner used $z^\epsilon$ to denote $\epsilon r^\epsilon$.
\section{Monotonicity formula}
Based on Huisken's monotonicity formula \cite{Huisken90}, Ilmanen \cite[Section 3]{Ilmanen93} proved a monotonicity formula for the measures which are associated with the solutions to the Allen-Cahn equation, with a technical assumption on the initial data (see Ilmanen \cite[1.4 (i)]{Ilmanen93}). Later Soner \cite{Soner97} removed this assumption. Let us first state the monotonicity formula for Allen-Cahn equations on $\R^n$. Let $\rho_{y,s}$ be the backward heat kernel in $\R^n$ multiplying by $\sqrt{4\pi(s-t)}$:
\[\rho_{y,s}(x,t)=\frac{1}{(4\pi(s-t))^{(n-1)/2}}e^{-\frac{|x-y|^2}{4(s-t)}}.\]

\begin{theorem}(\cite[3.3]{Ilmanen93})\label{thm:monotonicity of F-epsilon in R^n}
	Suppose $\mu^\epsilon_t$ is the measure associated with an Allen-Cahn equation on $\R^n$. Suppose the discrepancy measure $\xi_0$ at time $0$ is non-positive. Then for every $0\leq t_1\leq t_2<s$, we have
	\begin{equation}\label{Eq:monotonicity of F-epsilon}
		\int \rho_{y,s}(t_2,x)d\mu^\epsilon_{t_2}(x)\leq \int\rho_{y,s}(t_1,x)d\mu^\epsilon_{t_1}(x).
	\end{equation}
\end{theorem}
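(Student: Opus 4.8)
The plan is to prove the diffuse analogue of Huisken's monotonicity by showing that the $\rho$-weighted energy
\[
\Phi(t) = \int \rho_{y,s}(x,t)\, d\mu^\epsilon_t(x) = \int \rho_{y,s}(x,t)\left(\frac{\epsilon}{2}|Du^\epsilon|^2 + \frac{1}{\epsilon}F(u^\epsilon)\right)dx
\]
is non-increasing, by differentiating under the integral sign and establishing a pointwise-in-time differential inequality $\Phi'(t)\le 0$; integrating over $[t_1,t_2]$ then gives \eqref{Eq:monotonicity of F-epsilon}. Two ingredients drive the computation: a clean divergence-form evolution for the energy density $e^\epsilon = \frac{\epsilon}{2}|Du^\epsilon|^2 + \frac1\epsilon F(u^\epsilon)$, and the backward-kernel identities $D\rho_{y,s} = -\frac{x-y}{2(s-t)}\rho_{y,s}$ and $\partial_t\rho_{y,s} + \Delta\rho_{y,s} = -\frac{1}{2(s-t)}\rho_{y,s}$, where the exponent $(n-1)/2$ is responsible for the last term.

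First I would record the evolution of the energy density. Writing $f=F'$ and combining $u^\epsilon_t = \Delta u^\epsilon - \frac1{\epsilon^2}f(u^\epsilon)$ with the algebraic identity $\epsilon\Delta u^\epsilon - \frac1\epsilon f(u^\epsilon) = \epsilon u^\epsilon_t$, a short computation gives
\[
\partial_t e^\epsilon = \epsilon\,\dv(u^\epsilon_t\, Du^\epsilon) - \epsilon(u^\epsilon_t)^2.
\]
Differentiating $\Phi$, integrating the divergence term by parts against $\rho_{y,s}$, and substituting $D\rho_{y,s} = -\frac{x-y}{2(s-t)}\rho_{y,s}$ yields
\[
\Phi'(t) = \int (\partial_t\rho_{y,s})\,e^\epsilon\,dx + \int \epsilon\, u^\epsilon_t\,\frac{\langle x-y, Du^\epsilon\rangle}{2(s-t)}\,\rho_{y,s}\,dx - \int \epsilon\,(u^\epsilon_t)^2\,\rho_{y,s}\,dx.
\]

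The heart of the argument is a purely spatial, Pohozaev-type identity rewriting the kernel term $\int(\partial_t\rho_{y,s})e^\epsilon\,dx$. Setting $q = \frac{\langle x-y, Du^\epsilon\rangle}{2(s-t)}$ and integrating by parts with $\dv((x-y)\rho_{y,s}) = (n - \frac{|x-y|^2}{2(s-t)})\rho_{y,s}$ — once on the term $\frac1\epsilon f(u^\epsilon)\langle x-y, Du^\epsilon\rangle = \langle x-y, D(F(u^\epsilon))\rangle$, and once, after writing $\Delta u^\epsilon = \dv(Du^\epsilon)$, on $\epsilon\Delta u^\epsilon\,\langle x-y, Du^\epsilon\rangle$ — and then recognizing $\epsilon|Du^\epsilon|^2 - e^\epsilon$ as exactly the discrepancy density $\xi^\epsilon$, I expect to arrive at
\[
\int(\partial_t\rho_{y,s})\,e^\epsilon\,dx = \int \epsilon\,u^\epsilon_t\,q\,\rho_{y,s}\,dx - \int \epsilon\,q^2\,\rho_{y,s}\,dx + \frac{1}{2(s-t)}\int \rho_{y,s}\,d\xi^\epsilon_t.
\]
Feeding this back into $\Phi'(t)$, the two copies of the cross term $\int\epsilon u^\epsilon_t q\,\rho_{y,s}$ combine and complete the square, giving the exact formula
\[
\Phi'(t) = -\int \epsilon\left(u^\epsilon_t - \frac{\langle x-y, Du^\epsilon\rangle}{2(s-t)}\right)^2\rho_{y,s}\,dx + \frac{1}{2(s-t)}\int \rho_{y,s}\,d\xi^\epsilon_t.
\]

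To conclude, the first term is manifestly non-positive, and since $s-t>0$ while Theorem \ref{thm:discrepancy negative} propagates the hypothesis $\xi^\epsilon_0\le 0$ to $\xi^\epsilon_t\le 0$ for all $t>0$, the discrepancy term is also non-positive; hence $\Phi'(t)\le 0$, and integrating over $[t_1,t_2]$ proves the claim. I expect the main obstacle to be twofold: carrying out the Pohozaev bookkeeping so that the leftover $\int\rho_{y,s}|Du^\epsilon|^2$ and $\int\rho_{y,s}e^\epsilon$ pieces collapse precisely into the single discrepancy term — it is exactly the exponent $(n-1)/2$ that makes the $n$-dependent contributions cancel — and justifying the integrations by parts on $\R^n$, which requires enough spatial decay of $u^\epsilon$ and its derivatives, here ensured by integrating against the rapidly decaying Gaussian $\rho_{y,s}$ with finite weighted energy. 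On a general manifold a Bochner/Ricci term would enter at the integration-by-parts step, but in $\R^n$ it vanishes, which is why no curvature hypothesis appears in this statement.
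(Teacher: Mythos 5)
Your proposal is correct and follows essentially the same route as the paper's (i.e., Ilmanen's) argument: differentiate the Gaussian-weighted energy, integrate by parts against $\rho_{y,s}$, complete the square, and reduce to the differential inequality $\frac{d}{dt}\int\rho_{y,s}\,d\mu_t^\epsilon \leq -\int\epsilon\bigl(u_t^\epsilon - q\bigr)^2\rho_{y,s}\,dx + \frac{1}{2(s-t)}\int\rho_{y,s}\,d\xi_t^\epsilon$, then invoke Theorem \ref{thm:discrepancy negative} to propagate $\xi^\epsilon_t\leq 0$. The only difference is organizational: the paper cites Ilmanen's general test-function identity and plugs in $\rho_{y,s}$, whereas you re-derive that identity for the Gaussian weight directly via the Pohozaev-type computation, and your square $(u_t^\epsilon-q)^2$ coincides with the paper's $\bigl(-\Delta u+\tfrac{1}{\epsilon^2}f(u)-\tfrac{Du\cdot D\rho}{\rho}\bigr)^2$ since $\tfrac{Du\cdot D\rho_{y,s}}{\rho_{y,s}}=-q$.
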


\begin{theorem}(\cite[Corollary 5.1]{Soner97})\label{thm:monotonicity of F in R^n}
	Suppose $\mu_t$ is the limit measure associated with Allen-Cahn equations on $\R^n$. For every $0<t_1\leq t_2<s$, we have
	\begin{equation}\label{Eq:monotonicity of F}
		\int \rho_{y,s}(t_2,x)d\mu_{t_2}(x)\leq \int\rho_{y,s}(t_1,x)d\mu_{t_1}(x).
	\end{equation}
\end{theorem}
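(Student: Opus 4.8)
The plan is to deduce the statement for the limit measure from its $\epsilon$-level counterpart, Theorem \ref{thm:monotonicity of F-epsilon in R^n}, by letting $\epsilon \to 0$, the crucial point being to dispense with the hypothesis $\xi_0 \le 0$ using the discrepancy estimate of Theorem \ref{thm:discrepancy estimate}. That hypothesis enters Theorem \ref{thm:monotonicity of F-epsilon in R^n} only to render the discrepancy contribution in the underlying differential identity non-positive; so rather than invoking that theorem as a black box, I would start one level deeper, from the differential form of Ilmanen's and Soner's monotonicity formula, in which the discrepancy appears linearly. Schematically this reads
\begin{equation}
\frac{d}{dt}\int \rho_{y,s}(\cdot,t)\,d\mu_t^\epsilon \;\le\; \frac{1}{2(s-t)}\int \rho_{y,s}(\cdot,t)\,d\xi_t^\epsilon,
\end{equation}
the remaining terms on the right forming a Huisken-type square that is manifestly non-positive. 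Integrating in $t$ over $[t_1,t_2]$ yields
\begin{equation}
\int \rho_{y,s}(t_2)\,d\mu_{t_2}^\epsilon - \int \rho_{y,s}(t_1)\,d\mu_{t_1}^\epsilon \;\le\; \int_{t_1}^{t_2}\frac{1}{2(s-t)}\int \rho_{y,s}\,d\xi_t^\epsilon\,dt =: E^\epsilon(t_1,t_2).
\end{equation}

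The core of the argument is to show $E^\epsilon(t_1,t_2)\to 0$ as $\epsilon\to 0$ for fixed $0<t_1\le t_2<s$. Only the positive part of $\xi_t^\epsilon$ matters here, and this is exactly what Theorem \ref{thm:discrepancy estimate} controls: writing $d\xi_t^\epsilon = \tfrac{1}{\epsilon}F(u^\epsilon)(|Dr^\epsilon|^2-1)\,dx$ and using $|Dr^\epsilon|^2 - 1 \le \tfrac{2}{\log(1/\epsilon)}\tfrac{(\epsilon r^\epsilon)^2+1}{t}$, the positive part of the discrepancy is bounded pointwise by $\tfrac{C}{\log(1/\epsilon)\,t}\big((\epsilon r^\epsilon)^2+1\big)\,d\mu_t^\epsilon$. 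Since $t\ge t_1>0$ on the interval of integration and $\tfrac{1}{s-t}$ is integrable there, it remains to bound $\int \rho_{y,s}\,\big((\epsilon r^\epsilon)^2+1\big)\,d\mu_t^\epsilon$ uniformly in $\epsilon$; combined with the prefactor $1/\log(1/\epsilon)$ this forces $E^\epsilon\to 0$. The uniform bound follows from the uniform total-mass (area-growth) bound implied by the entropy hypothesis together with the Gaussian decay of $\rho_{y,s}$, which dominates the polynomial weight $(\epsilon r^\epsilon)^2$ in the tails.

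With the error controlled, I would pass to the limit in the boundary terms. For a.e.\ $t$ one has the weak-$*$ convergence $\mu_t^\epsilon \rightharpoonup \mu_t$ (Ilmanen), and because $\rho_{y,s}(\cdot,t)$ is smooth with only Gaussian decay rather than compact support, the convergence $\int \rho_{y,s}(t_i)\,d\mu_{t_i}^\epsilon \to \int \rho_{y,s}(t_i)\,d\mu_{t_i}$ requires a tail estimate: I would split each integral over a large ball $B_R(y)$ and its complement, use weak convergence on $B_R$ and the uniform mass bound together with Gaussian smallness on the complement, then send $R\to\infty$. Letting $\epsilon\to 0$ along the subsequence realizing $\mu_t$ then gives the desired inequality first for a.e.\ $0<t_1\le t_2<s$, and then for all such $t_1,t_2$ by the upper semicontinuity in $t$ of $\int\rho_{y,s}\,d\mu_t$ that the inequality itself supplies on a dense set of times.

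The main obstacle I expect is the error estimate of the second paragraph: making rigorous that the positive part of the discrepancy, weighted by $\rho_{y,s}$ and the singular-in-time factor $1/(s-t)$, integrates to a quantity that genuinely vanishes with $\epsilon$. This hinges on the interplay between the logarithmically small constant from Theorem \ref{thm:discrepancy estimate}, the growth of the weight $(\epsilon r^\epsilon)^2$ away from the interface, and the Gaussian tail of $\rho_{y,s}$; controlling the last of these uniformly in $\epsilon$ is where the uniform mass/entropy hypotheses are essential.
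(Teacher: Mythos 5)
Your overall strategy coincides with the paper's (the paper cites Soner for the $\R^n$ case, but its proof of the manifold version, Theorem \ref{thm:monotonicity of F in M}, contains exactly this argument): start from the differential form of the monotonicity formula in which the discrepancy enters linearly, bound the positive part of $\xi_t^\epsilon$ via Theorem \ref{thm:discrepancy estimate}, show the error vanishes as $\epsilon\to 0$, and pass to the limit by weak-$*$ convergence. The gap is precisely in the step you yourself flag as the main obstacle: your mechanism for bounding $\int\rho_{y,s}\bigl((\epsilon r^\epsilon)^2+1\bigr)\,d\mu_t^\epsilon$ uniformly in $\epsilon$ does not work as stated. You argue that the Gaussian decay of $\rho_{y,s}$ dominates the ``polynomial weight'' $(\epsilon r^\epsilon)^2$ in the tails, but $(\epsilon r^\epsilon)^2$ is not a function of $|x-y|$: the variable $r^\epsilon$ is anchored to the interface $\{u^\epsilon=0\}$ (it vanishes there and is roughly $1$-Lipschitz), and the interface need not pass anywhere near the pole $y$ of the Gaussian. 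Moreover, an area-growth or total-mass bound on $\mu_t^\epsilon$ contains no information about where that mass sits relative to the level sets of $r^\epsilon$, so ``Gaussian decay centered at $y$ plus a mass bound'' cannot by itself control a weight that grows with distance from the interface. (Also, this theorem carries no entropy hypothesis; the uniform mass bounds you invoke come only implicitly from the setup guaranteeing existence of the limit measure.)

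What actually kills this term --- and it is the one ingredient your proposal is missing --- is the pointwise profile bound \eqref{eq:x^2FleqC}, $x^2F(q^\epsilon(x))\le 4$: the exponential decay of $F\circ q^\epsilon$ in the profile variable beats the quadratic weight, giving
\[
(\epsilon r^\epsilon)^2\,\tfrac{1}{\epsilon}F(u^\epsilon)\,dx \;=\; \epsilon\,(r^\epsilon)^2F\bigl(q^\epsilon(r^\epsilon)\bigr)\,dx \;\le\; 4\epsilon\,dx,
\]
so this part of the error is controlled by $\epsilon$ times \emph{Lebesgue} measure rather than by $\mu_t^\epsilon$, and then $\int\rho_{y,s}\,dx=\sqrt{4\pi(s-t)}$ finishes it with no mass bound at all. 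Note also that the paper does not bound the remaining ``$+1$'' part uniformly either: it keeps it as $\tfrac{C}{\log(1/\epsilon)\,t}\int\rho_{y,s}\,d\mu_t^\epsilon$ and closes a Gronwall inequality whose exponent $K=(\log(1/\epsilon)\,t_1)^{-1}$ tends to $0$, which again avoids any uniform-in-$\epsilon$ input. Your final weak-$*$ limiting step is fine; if you replace the tail heuristic by \eqref{eq:x^2FleqC}, and handle the $d\mu_t^\epsilon$ part either by your uniform bound or by the paper's Gronwall argument, the proof goes through.
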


In this section, we are going to generalize this monotonicity formulas to the parabolic Allen-Cahn equation on a certain class of manifolds. Suppose $M$ is a manifold. We use $\cH(x,y,t)$ to denote the heat kernels on $M$ and we call $\cH(x,y,s-t)$ the backward heat kernel if we fix $s>0$. We have
\[\partial_t\cH(x,y,t)=\Delta_x\cH(x,y,t).\]
Moreover we define
\begin{equation}
	\rho_{y,s}(t,x)=\sqrt{4\pi(s-t)}\cH(x,y,s-t).
\end{equation}
Note that if $M=\R^n$, then this definition coincides with the definition of $\rho_{y,s}$ in $\R^n$ as in the beginning of this section.

In the rest of this section we prove the following monotonicity formulas.

\begin{theorem}\label{thm:monotonicity of F-epsilon in M}
	Suppose $\mu^\epsilon_t$ is the measure associated with Allen-Cahn equations on $M$, where $M$ is a compact manifold with parallel Ricci curvature and non-negative sectional curvature. Suppose the discrepancy measure $\xi_0$ at time $0$ is non-positive. For every $0\leq t_1\leq t_2<s$, we have
	\begin{equation}
		\int \rho_{y,s}(t_2,x)d\mu^\epsilon_{t_2}(x)\leq \int\rho_{y,s}(t_1,x)d\mu^\epsilon_{t_1}(x).
	\end{equation}
\end{theorem}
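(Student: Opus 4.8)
The plan is to prove the stronger differential inequality
\begin{equation}
\frac{d}{dt}\int_M\rho_{y,s}(t,x)\,d\mu^\epsilon_t(x)\leq 0 \qquad\text{for } t\in[t_1,t_2]\subset[0,s),
\end{equation}
and then integrate in $t$. Write the energy density as $e^\epsilon=\frac{\epsilon}{2}|Du^\epsilon|^2+\frac1\epsilon F(u^\epsilon)$, so that $d\mu^\epsilon_t=e^\epsilon\,dV$, and abbreviate $w=\partial_t u^\epsilon=\Delta u^\epsilon-\frac1{\epsilon^2}f(u^\epsilon)$ and $\tau=s-t$. Since $M$ is compact and $u^\epsilon$ is smooth, differentiation under the integral sign and all integrations by parts are legitimate with no boundary contributions. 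The derivative splits into a $\partial_t e^\epsilon$ piece and a $\partial_t\rho$ piece, and I would treat them separately.

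First I would handle the energy piece. Differentiating $e^\epsilon$, inserting the Allen--Cahn equation, and integrating by parts once yields
\begin{equation}
\int_M\rho\,\partial_t e^\epsilon\,dV=-\int_M\epsilon\,w\,\langle D\rho,Du^\epsilon\rangle\,dV-\int_M\epsilon\,\rho\,w^2\,dV,
\end{equation}
exactly as in $\R^n$; this identity uses only the PDE and integration by parts and is therefore insensitive to the geometry of $M$. The last term is the gradient-flow dissipation and is already non-positive. For the kernel piece I would use that $\rho_{y,s}(t,\cdot)=\sqrt{4\pi\tau}\,\cH(\cdot,y,\tau)$ with $\cH$ solving the heat equation, which gives the pointwise identity $\partial_t\rho=-\Delta\rho-\frac{1}{2\tau}\rho$ on $M$ just as on $\R^n$. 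Substituting this, integrating by parts, applying Bochner's formula to $\Delta|Du^\epsilon|^2$, and completing the square in the pair $(w,\langle D\rho,Du^\epsilon\rangle)$, I expect the computation of Ilmanen and Soner to reproduce a Huisken-type non-positive term of the form $-\int_M\epsilon\rho\,\big(w-\langle Du^\epsilon,D\rho\rangle/\rho\big)^2\,dV$ together with a term controlled by the discrepancy, proportional to $\frac1\tau\int_M\rho\,d\xi^\epsilon_t$. The discrepancy term is non-positive because $\xi^\epsilon_0\leq 0$ forces $\xi^\epsilon_t\leq 0$ for all $t>0$ by Theorem \ref{thm:discrepancy negative}. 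On $\R^n$ the remaining pointwise identities $D\rho/\rho=-(x-y)/2\tau$ and $\Hess\log\rho+\frac{1}{2\tau}g=0$ make everything else vanish, which recovers Theorem \ref{thm:monotonicity of F-epsilon in R^n}.

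The genuinely new point is the curvature of $M$, and this is where I expect the main obstacle. Relative to $\R^n$, two extra contributions appear: a Ricci term produced by Bochner's formula and by commuting covariant derivatives (with the commutator $\Delta D_i u^\epsilon - D_i\Delta u^\epsilon=\Ric_{ij}D_j u^\epsilon$ entering), and, more seriously, a term of the form
\begin{equation}
\int_M\Big(\Hess\log\rho+\tfrac{1}{2\tau}g\Big)(Du^\epsilon,Du^\epsilon)\,(\cdots)\,\rho\,dV,
\end{equation}
which measures the deviation of the heat kernel from its Euclidean model and no longer vanishes on $M$. The difficulty is to assign this term the sign required for monotonicity. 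To do so I would invoke Hamilton's matrix Harnack (Li--Yau--Hamilton) inequality for the heat kernel, which under precisely the hypotheses that $M$ has non-negative sectional curvature and parallel Ricci curvature asserts $\Hess\log\cH+\frac{1}{2\tau}g\geq 0$, hence $\Hess\log\rho+\frac{1}{2\tau}g\geq 0$ as a quadratic form.

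The crux of the argument is therefore bookkeeping: one must verify that the square-completion leaves the heat-kernel Hessian term attached to a coefficient of the sign for which Hamilton's one-sided bound is favourable, while the Ricci term inherits a favourable sign directly from $\Ric\geq 0$ (implied by non-negative sectional curvature). Granting this, every term in $\frac{d}{dt}\int_M\rho\,d\mu^\epsilon_t$ is non-positive, and integrating the differential inequality over $[t_1,t_2]$ gives the stated monotonicity. I would emphasize that the parallel-Ricci and non-negative-sectional-curvature hypotheses are not used to sign the extra term by hand but are exactly the conditions guaranteeing Hamilton's estimate; without them one can only expect a Gronwall-type bound rather than genuine monotonicity.
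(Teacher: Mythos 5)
Your proposal is correct and follows essentially the same route as the paper's proof: Ilmanen's weighted energy identity, preservation of $\xi^\epsilon_t\leq 0$ from Theorem \ref{thm:discrepancy negative}, and Hamilton's matrix Harnack inequality \cite{Hamilton93H} to handle the heat-kernel term, under exactly the stated curvature hypotheses. Two remarks settle the bookkeeping you deferred. First, the Bochner/Ricci commutator term you anticipate never materializes: Ilmanen's identity
\begin{equation*}
\begin{split}
\frac{d}{dt}\int\phi\, d\mu_t^\epsilon
=&-\epsilon\int\phi\Bigl(-\Delta u+\tfrac{1}{\epsilon^2}f(u)-\tfrac{Du\cdot D\phi}{\phi}\Bigr)^2 dV
+\int\bigl(\Delta \phi +\partial_t\phi\bigr)\,d\mu_t^\epsilon\\
&+\int\Bigl(-\nu\otimes \nu:D^2\phi+\tfrac{(\nu\cdot D\phi)^2}{\phi}\Bigr)\,(d\mu_t^\epsilon+d\xi_t^\epsilon)
\end{split}
\end{equation*}
is derived purely from the PDE, integration by parts, and the symmetry of the Hessian --- no covariant derivatives are ever commuted --- so it holds verbatim on a closed manifold, and the only curvature-sensitive object in the whole argument is $\rho$ itself. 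Second, the sign bookkeeping works out exactly as you hoped because the heat-kernel Hessian term enters multiplied by $d\mu_t^\epsilon+d\xi_t^\epsilon=\epsilon|Du^\epsilon|^2\,dV\geq 0$: Hamilton's estimate gives $-\nu\otimes\nu:D^2\rho+(\nu\cdot D\rho)^2/\rho\leq\tfrac{1}{2(s-t)}\rho$ pointwise, and this positive contribution cancels against $\int(\Delta\rho+\partial_t\rho)\,d\mu^\epsilon_t=-\tfrac{1}{2(s-t)}\int\rho\, d\mu^\epsilon_t$, leaving precisely $\tfrac{1}{2(s-t)}\int\rho\,d\xi^\epsilon_t\leq 0$, which is the paper's inequality \eqref{eq:last step of dt int rho}; so it is the combination, not each term separately, that is non-positive.
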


\begin{theorem}\label{thm:monotonicity of F in M}
	Suppose $\mu_t$ is the limit measure associated to Allen-Cahn equations on $M$, where $M$ is a compact manifold with parallel Ricci curvature and non-negative sectional curvature. For every $0<t_1\leq t_2<s$, we have
	\begin{equation}
		\int \rho_{y,s}(t_2,x)d\mu_{t_2}(x)\leq \int\rho_{y,s}(t_1,x)d\mu_{t_1}(x).
	\end{equation}
\end{theorem}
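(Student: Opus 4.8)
The plan is to derive Theorem \ref{thm:monotonicity of F in M} from the $\epsilon$-level statement Theorem \ref{thm:monotonicity of F-epsilon in M} by letting $\epsilon\to 0$, in exactly the way Soner passes from Theorem \ref{thm:monotonicity of F-epsilon in R^n} to Theorem \ref{thm:monotonicity of F in R^n} in the Euclidean setting. The decisive difference from Theorem \ref{thm:monotonicity of F-epsilon in M} is that here we do \emph{not} assume $\xi_0^\epsilon\le 0$; consequently the discrepancy term in the monotonicity computation is no longer automatically nonpositive and must instead be shown to vanish in the limit. This is precisely what forces the restriction $t_1>0$, since the discrepancy estimate Theorem \ref{thm:discrepancy estimate} degenerates as $t\to 0$.

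First I would revisit the computation underlying Theorem \ref{thm:monotonicity of F-epsilon in M}, but retain the discrepancy contribution rather than discarding it via the sign hypothesis. Integrating the resulting differential identity over $[t_1,t_2]$ yields
\begin{equation*}
\int \rho_{y,s}(t_2,\cdot)\,d\mu^\epsilon_{t_2} \;\le\; \int \rho_{y,s}(t_1,\cdot)\,d\mu^\epsilon_{t_1} + \int_{t_1}^{t_2}\!\int \psi_t\,d\xi^\epsilon_t\,dt,
\end{equation*}
where $\psi_t\ge 0$ is the weight multiplying the discrepancy, i.e.\ the same term whose nonpositivity under $\xi^\epsilon_t\le 0$ produced Theorem \ref{thm:monotonicity of F-epsilon in M}. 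For $t\in[t_1,t_2]\subset(0,s)$ this weight is bounded: it is built from $\rho_{y,s}$ and $\dist(\cdot,y)^2\,\rho_{y,s}$, which are controlled away from $t=s$ by the Gaussian-type heat kernel bounds available under the curvature hypotheses. Since $d\xi^\epsilon_t=(|Dr^\epsilon|^2-1)\tfrac{1}{\epsilon}F(u^\epsilon)\,dx$, I would then invoke Theorem \ref{thm:discrepancy estimate} to bound the positive part $(|Dr^\epsilon|^2-1)_+\le \tfrac{2}{\log(1/\epsilon)}\tfrac{(\epsilon r^\epsilon)^2+1}{t}$, so that
\begin{equation*}
\int \psi_t\,d\xi^\epsilon_t \;\le\; \|\psi_t\|_\infty \int (\xi^\epsilon_t)_+ \;\le\; \frac{C}{t\,\log(1/\epsilon)} \int \frac{(\epsilon r^\epsilon)^2+1}{\epsilon}\, F(u^\epsilon)\,dx.
\end{equation*}

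The hard part will be to see that the last integral is uniformly bounded, for then the error is $O(1/\log(1/\epsilon))\to 0$ on $[t_1,t_2]$ with $t_1>0$. The $+1$ contribution is controlled by the uniformly bounded total energy $\int d\mu^\epsilon_t$; the $(\epsilon r^\epsilon)^2$ contribution is the delicate one, since $r^\epsilon$ is large exactly where $u^\epsilon$ is near $\pm1$, yet there $F(u^\epsilon)$ decays exponentially and the product stays integrable. This balance is the content of Soner's analysis and is what lets us dispense with the sign assumption on $\xi_0^\epsilon$. Once the error is shown to vanish, I would pass to the limit: along the subsequence $\epsilon\to 0$ for which Ilmanen's convergence $\mu^\epsilon_t\rightharpoonup\mu_t$ holds, and for a.e.\ $t_1,t_2$, the bounded continuous test functions $\rho_{y,s}(t_i,\cdot)$ give $\int \rho_{y,s}(t_i,\cdot)\,d\mu^\epsilon_{t_i}\to\int \rho_{y,s}(t_i,\cdot)\,d\mu_{t_i}$, yielding the claimed inequality for a.e.\ $0<t_1\le t_2<s$; the remaining times follow by approximation, using the upper-semicontinuity of $t\mapsto\mu_t$ inherent in the Brakke flow limit. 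The only genuinely manifold-specific ingredients—the heat kernel bounds and the extra curvature term handled by Hamilton's Harnack inequality—are already absorbed into Theorem \ref{thm:monotonicity of F-epsilon in M} and Theorem \ref{thm:discrepancy estimate}, so beyond them the argument is identical to the Euclidean one.
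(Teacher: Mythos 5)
Your proposal is correct and follows essentially the same route as the paper: the paper likewise keeps the discrepancy term in the differential inequality, bounds its positive part via Soner's estimate (Theorem \ref{thm:discrepancy estimate}), shows the resulting error is $O(1/\log(1/\epsilon))$ for $t\in[t_1,t_2]$ with $t_1>0$, and then lets $\epsilon\to 0$. The only differences are cosmetic: the paper disposes of your ``delicate'' $(\epsilon r^\epsilon)^2$-contribution by the explicit pointwise bound $x^2F(q^\epsilon(x))\le 4$ (its \eqref{eq:x^2FleqC}, which is exactly the quantitative form of your exponential-decay heuristic and makes that term $O(\epsilon)$), and it closes the estimate with a Gronwall-type integration keeping $\int\rho_{y,s}\,d\mu_t^\epsilon$ on the right-hand side instead of your sup-norm-times-total-energy bound.
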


\begin{proof}[Proof of Theorem \ref{thm:monotonicity of F-epsilon in M} and Theorem \ref{thm:monotonicity of F in M}]
	
	The proof follows the calculations of \cite{Ilmanen93},\cite{Soner97}, and an idea of Hamilton in \cite{Hamilton93M}. In the following, for two matrices $A$ and $B$, $A:B$ denotes the inner product of them as matrices. We also define the unit normal vector to the level sets by $\nu=\frac{Du^\epsilon}{|Du^\epsilon|}$. Note that $\nu$ is not well-defined at the points where $Du^\epsilon=0$, but it does not matter the later calculations, and we can define $\nu$ to be any unit vector at the point where $Du=0$.
	
	Recall the calculations in \cite[3.2]{Ilmanen93}: for $\phi\in C^2(M,R^+)$,
	\begin{equation}
		\begin{split}
			\frac{d}{dt}\int\phi d\mu_t^\epsilon
			=&
			-\epsilon\int\phi\left(-\Delta u+\frac{1}{\epsilon^2}f(u)-\frac{Du\cdot D\phi}{\phi}\right)^2 dx
			\\
			&+
			\int\left(\Delta \phi +\frac{\partial}{\partial t}\phi\right)
			d\mu_t^\epsilon
			+
			\int\left(-\nu\otimes \nu:D^2\phi+\frac{(\nu\cdot D\phi)^2}{\phi}\right)
			(d\mu_t^\epsilon+d\xi_t^\epsilon)
			\\
			\leq &
			\int\left(\Delta \phi +\frac{\partial}{\partial t}\phi\right)
			d\mu_t^\epsilon
			+
			\int\left(-\nu\otimes \nu:D^2\phi+\frac{(\nu\cdot D\phi)^2}{\phi}\right)
			(d\mu_t^\epsilon+d\xi_t^\epsilon)
		\end{split}
	\end{equation}
	Now we insert $\rho_{y,s}(x,t)$ as the test function $\phi$. Note that
	\[\frac{\partial}{\partial t} \rho_{y,s}(x,t)=-\Delta \rho_{y,s}(x,t)-\frac{1}{2(s-t)}\rho_{y,s}(x,t).\]
	Moreover, by Hamilton's Hanarck inequality (see \cite[Corollary 4.4]{Hamilton93H}),
	\[\nu\otimes\nu:\left(D^2\cH-\frac{D\cH\otimes D \cH}{\cH}+\frac{1}{2(s-t)}\cH g\right)\geq 0,\]
	while $\cH=\cH(x,y,s-t)$ and $g$ is the metric tensor on $M$. These imply
	
	\begin{equation}\label{eq:last step of dt int rho}
		\begin{split}
			\frac{d}{dt}\int\rho_{y,s} d\mu_t^\epsilon
			\leq
			&
			\int\left(\Delta \rho_{y,s} +\frac{\partial}{\partial t}\rho_{y,s}\right)
			d\mu_t^\epsilon
			+
			\int\frac{1}{2(s-t)}\rho_{y,s}
			(d\mu_t^\epsilon+d\xi_t^\epsilon)
			\\
			\leq& 
			\int\frac{1}{2(s-t)}\rho_{y,s}d\xi_t^\epsilon.
		\end{split}
	\end{equation}
	
	If $\xi_0\leq 0$, then Theorem \ref{thm:discrepancy negative} shows that $\xi_t\leq 0$, so we complete the proof of Theorem \ref{thm:monotonicity of F-epsilon in M}. In the following we are going to prove Theorem \ref{thm:monotonicity of F in M}. Namely, even the discrepancy measure is not assumed to be non-positive, the inequality
	\begin{equation}
		\int \rho_{y,s}(t_2,x)d\mu_{t_2}(x)\leq \int\rho_{y,s}(t_1,x)d\mu_{t_1}(x)
	\end{equation}
	holds for the limit measure $\mu_t$.
	
	By Theorem \ref{thm:discrepancy estimate}, \eqref{eq:last step of dt int rho} becomes
	
	\begin{equation}
		\frac{d}{dt}\int\rho_{y,s} d\mu_t^\epsilon
		\leq
		\int\frac{1}{2(s-t)}\frac{1}{\epsilon}\frac{2}{\log(1/\epsilon)}\frac{(\epsilon r^\epsilon)^2+1}{t}F(u^\epsilon)\rho_{y,s}dx.
	\end{equation}
	
	We note that 
	\begin{equation}
		\frac{1}{\epsilon}F(u^\epsilon)dx\leq d\mu_t^\epsilon
	\end{equation}
	and
	\begin{equation}\label{eq:x^2FleqC}
		x^2 F(q^\epsilon(x))\leq 4,\qquad \forall x\in\R.
	\end{equation}
	
	So we have
	\begin{equation}
		\begin{split}
			\frac{d}{dt}\int\rho_{y,s} d\mu_t^\epsilon
			\leq&
			\frac{1}{(s-t)\log(1/\epsilon)t}\int \rho_{y,s}d\mu_t^\epsilon
			+
			\frac{4\epsilon}{(s-t)\log(1/\epsilon)t}\int \rho_{y,s} dx,
		\end{split}
	\end{equation}
	
	and when $t\geq t_1$,
	
	\begin{equation}
		\begin{split}
			\frac{d}{dt}\int\rho_{y,s} d\mu_t^\epsilon
			\leq&
			\frac{1}{(s-t)\log(1/\epsilon)t_1}\int \rho_{y,s}d\mu_t^\epsilon
			+
			\frac{4\epsilon}{(s-t)\log(1/\epsilon)t_1}\int \rho_{y,s} dx.
		\end{split}
	\end{equation}
	
	We also note that
	\[
	\int\rho_{y,s}(t,x)dx =\sqrt{4\pi(s-t)}\int \cH(x,y,s-t)dx=\sqrt{4\pi(s-t)}.
	\]
	
	In conclusion, we obtain the following differential inequality 
	\begin{equation}
		\frac{d}{dt}\int\rho_{y,s} d\mu_t^\epsilon
		\leq
		\frac{1}{(s-t)\log(1/\epsilon)t_1}\left(\int \rho_{y,s}d\mu_t^\epsilon
		+
		4\epsilon\sqrt{4\pi(s-t)}
		\right).
	\end{equation}
	
	Then we obtain a Gronwall type inequality for $t\geq t_1$
	\begin{equation}
		\frac{d}{dt}\left(
		\left(\frac{s-t}{s-t_1}\right)^K\int\rho_{y,s}d\mu_{t}^\epsilon
		-8\epsilon\sqrt{\pi} K\int_{t_1}^t (s-b)^{-1/2} \left(\frac{s-b}{s-t_1}\right)^K db\right)
		\leq 0,
	\end{equation}
	where $K=(\log(1/\epsilon)t_1)^{-1}$. Therefore,
	
	\begin{equation}
		\int\rho_{y,s}d\mu_{t_2}^\epsilon
		\leq
		\left(\frac{s-t_1}{s-t_2}\right)^K\int\rho_{y,s}d\mu_{t_1}^\epsilon
		+8\epsilon\sqrt{\pi} K\int_{t_1}^{t_2} (s-b)^{-1/2} \left(\frac{s-b}{s-t_2}\right)^K db.
	\end{equation}
	By letting $\epsilon\to 0$, we obtain the desired monotonicity formula in Theorem \ref{thm:monotonicity of F in M}.
\end{proof}

\begin{remark}\label{rmk:generalF}
	In the proof we assume 
	\[
	F(u)=\frac{1}{2}(1-u^2)^2,\quad f(u)=2u(u^2-1),\quad g(u)=1-u^2.
	\]
	For a general triple $F$, $f$ and $g$ satisfying \eqref{Eq:potential function}, the proof still works if \eqref{eq:x^2FleqC} holds (the right hand side can be replaced by any fixed positive number). 
\end{remark}

\section{Entropy of Parabolic Allen-Cahn Equation}
We follow the idea of Colding-Minicozzi \cite{Colding-Minicozzi12} to define the entropy of a Radon measure, c.f.\cite{sun2019entropy}.
\begin{definition}\label{Def:entropy}
	Suppose $M=\R^n$ or $M$ is a compact manifold. Given a Radon measure $\mu$ in $M$, we define the \emph{entropy} $\lambda(\mu)$ to be
	\[\lambda(\mu)=\sup_{(y,s)\in M\times(0,\infty)}\int\rho_{y,s}(x,0)d\mu(x).\]
\end{definition}

Entropy is a quantity that characterizes the measure from all scales. For example, the following lemma indicates that the entropy is equivalent to the volume growth bound of a measure:

\begin{lemma}\label{Lem:equivalence of entropy and area growth}
	Suppose $M=\R^n$ or $M$ is a compact manifold with non-negative Ricci curvature. There exists $C>0$ only depending on the geometry of $M$ such that for any Radon measure $\mu$ with bounded entropy, 
	\begin{equation}
		C^{-1}\sup_{x\in M,R>0}\frac{\mu(B_R(x))}{R^{n-1}}\leq\lambda(\mu)\leq C\sup_{x\in M,R>0}\frac{\mu(B_R(x))}{R^{n-1}}.
	\end{equation}
\end{lemma}

Here we only prove the case that $M=\R^n$. When $M$ is a compact manifold with non-negative Ricci curvature, the proof is very similar but needs some analysis on heat kernels, which is out of the scope of this paper, so we omit the proof here and refer the readers to \cite[Section 4]{sun2019entropy} for a proof. 

\begin{proof}[Proof of Lemma \ref{Lem:equivalence of entropy and area growth} when $M=\R^n$]
	On one hand, for any $x\in\mb R^{n}$ and $R>0$, we have
	\[\lambda(\mu)\geq \frac{1}{(4\pi t)^{n/2}}\int e^{\frac{-|y-x|^2}{4t}}\chi_{B_R(x)}d\mu(y) \geq \frac{1}{(4\pi t)^{n/2}}e^{-R^2/4t}\mu(B_R(x)).\]
	Then by choosing $t=R^2$ we have $\frac{\mu(B_R(x))}{R^n}\leq C\lambda(\mu)$, where $C$ is a constant.
	
	On the other hand, we only need to prove $\int e^{-|x|^2}d\mu(x)\leq C \sup_{x\in\mb R^{n},R>0}\frac{\mu(B_R(x))}{R^{n-1}}$ to conclude the second inequality. Let $\chi_{B_r(x)}$ be the characteristic function of the set $B_r(x)$.
	\begin{equation}
		\begin{split}
			\int e^{-|x|^2}d\mu(x)&\leq\sum_{y\in\mb Z^{n}}\int e^{-|x|^2}\chi_{B_2(y)}d\mu
			\leq C\sum_{y\in\mb Z^{n}} e^{-|y|^2}\mu(B_2(y))\\
			&\leq C\sup_{x\in\mb R^{n+1},R>0}\frac{\mu(B_R(x))}{R^{n-1}} \sum_{y\in\mb Z^{n}}e^{-|y|^2}\leq C \sup_{x\in\mb R^{n+1},R>0}\frac{\mu(B_R(x))}{R^{n-1}}.
		\end{split}
	\end{equation}
	Here $C$ varies from line to line, but it is always a constant only depending on $n$. $\mb Z^n$ consists of all the integer points in $\R^n$. Then we conclude this Lemma.
\end{proof}

If we only take the supremum over a subset of $\R^n\times(0,\infty)$, we get a localized version of entropy, see \cite{sun2018local}.

\begin{definition}\label{Def:local entropy}
	Given a Radon measure $\mu$ and $U\subset M$, $I\subset(0,\infty)$, we define the \emph{local entropy} $\lambda^I_U(\mu)$ to be
	\[\lambda^I_U(\mu)=\sup_{(y,s)\in U\times I}\int\rho_{y,s}(x,0)d\mu(x).\]
	We will omit the subscription $U$ if $U=M$.
\end{definition}

If we only study the local property, the entropy/local entropy actually gives a bound on the density. We define the $(n-1)$-dimensional density $\theta(x)$ by
\begin{equation}
	\theta(x)=\lim_{r\to 0}\frac{\mu(B_r(x))}{\omega_{n-1}r^{n-1}}
\end{equation}
whenever the limit exists. Here $\omega_{n-1}$ is the volume of the unit ball in $\R^{n-1}$.

\begin{lemma}\label{Lem:density is bounded by entropy}
	Suppose $M=\R^n$ or $M$ is a compact manifold. Suppose $\mu$ is an integral $(n-1)$-rectifiable Radon measure in $M$, then for $T\in(0,\infty)$ or $T=\infty$,
	\begin{equation}
		\theta(x)\leq \lambda^{(0,T)}(\mu).
	\end{equation}
\end{lemma}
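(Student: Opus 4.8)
The plan is to reduce everything to a fixed point $x_0$ at which the density $\theta(x_0)$ exists, and to compare the Gaussian integral $\int\rho_{x_0,s}(x,0)\,d\mu$ at small scales $s$ with $\theta(x_0)$. Since by Definition \ref{Def:local entropy} one has $\lambda^{(0,T)}(\mu)\ge\int\rho_{x_0,s}(x,0)\,d\mu$ for every $s\in(0,T)$, it suffices to prove
\[
\liminf_{s\to0}\int\rho_{x_0,s}(x,0)\,d\mu(x)\ \ge\ \theta(x_0).
\]
It is tempting to bound the integral from below by a single ball, $\int\rho_{x_0,s}(x,0)\,d\mu\ge(4\pi s)^{-(n-1)/2}e^{-r^2/(4s)}\mu(B_r(x_0))$, but optimizing over $r,s$ loses a dimensional constant and does not recover the sharp value $\theta(x_0)$. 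The correct computation must integrate the Gaussian weight across all scales simultaneously.

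First I would work on $\R^n$, where $\rho_{x_0,s}(x,0)=(4\pi s)^{-(n-1)/2}e^{-|x-x_0|^2/(4s)}$ is radially decreasing in $r=|x-x_0|$. Writing $F(r)=\mu(B_r(x_0))$ (note $F(0)=\mu(\{x_0\})=0$ since $\mu$ is $(n-1)$-rectifiable), the layer-cake/Fubini identity for a radial decreasing kernel gives
\[
\int\rho_{x_0,s}(x,0)\,d\mu(x)=\int_0^\infty\frac{r}{2s\,(4\pi s)^{(n-1)/2}}\,e^{-r^2/(4s)}\,F(r)\,dr.
\]
Substituting $r=\sqrt{s}\,\sigma$ turns this into $\int_0^\infty(4\pi s)^{-(n-1)/2}\tfrac{\sigma}{2}e^{-\sigma^2/4}F(\sqrt{s}\,\sigma)\,d\sigma$, and since
\[
(4\pi s)^{-(n-1)/2}F(\sqrt{s}\,\sigma)=\frac{\sigma^{n-1}}{(4\pi)^{(n-1)/2}}\cdot\frac{F(\sqrt{s}\,\sigma)}{(\sqrt{s}\,\sigma)^{n-1}}\ \longrightarrow\ \frac{\sigma^{n-1}}{(4\pi)^{(n-1)/2}}\,\theta(x_0)\,\omega_{n-1}\quad(s\to0)
\]
for each fixed $\sigma>0$ by the definition of $\theta(x_0)$, Fatou's lemma (all integrands are nonnegative) yields
\[
\liminf_{s\to0}\int\rho_{x_0,s}(x,0)\,d\mu\ \ge\ \frac{\theta(x_0)\,\omega_{n-1}}{2(4\pi)^{(n-1)/2}}\int_0^\infty\sigma^n e^{-\sigma^2/4}\,d\sigma=\theta(x_0),
\]
where the final equality is the elementary Gamma-function evaluation $\int_0^\infty\sigma^ne^{-\sigma^2/4}d\sigma=2^n\Gamma(\tfrac{n+1}{2})$ together with $\omega_{n-1}=\pi^{(n-1)/2}/\Gamma(\tfrac{n+1}{2})$. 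I stress that this direction uses only the existence of $\theta(x_0)$, not the full approximate tangent structure, and that Fatou removes any need to control the Gaussian tail at infinity.

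For the manifold case I would reduce to the Euclidean computation by the short-time heat-kernel asymptotics $\cH(x,x_0,t)=(4\pi t)^{-n/2}e^{-d(x,x_0)^2/(4t)}(1+O(t))$, so that $\rho_{x_0,s}(x,0)=\sqrt{4\pi s}\,\cH(x,x_0,s)$ carries the same leading Gaussian profile in geodesic normal coordinates at $x_0$; since $\theta(x_0)$ is a purely local quantity, the same rescaling and Fatou argument then applies verbatim. The main obstacle is making this reduction clean: one needs a genuine Gaussian lower bound for $\cH$ near the diagonal (available here from the Li--Yau estimate under the nonnegative Ricci hypothesis) and must check that the $O(t)$ correction together with the normal-coordinate volume distortion contribute only lower-order terms vanishing in the $s\to0$ limit. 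I expect that verifying these heat-kernel estimates, rather than the Euclidean constant computation, is the only delicate point.
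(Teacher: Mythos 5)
Your proof is correct, and it takes a genuinely different route from the paper's. The paper argues via geometric measure theory: since $\mu$ is integral $(n-1)$-rectifiable, at $\mu$-a.e.\ $x$ the blow-ups converge to $\theta(x)$ times an approximate tangent plane, and since the kernel $\rho_{x,s}(\cdot,0)$ integrates to exactly $1$ on any hyperplane through $x$, one gets the stronger statement $\theta(x)=\lim_{s\to0}\int\rho_{x,s}(y,0)\,d\mu(y)\leq\lambda^{(0,T)}(\mu)$; the manifold case is handled, as in your proposal, by the short-time heat kernel expansion. Your argument replaces the tangent-plane blow-up by the layer-cake identity, parabolic rescaling $r=\sqrt{s}\,\sigma$, Fatou, and the Gamma-function evaluation showing the constant is exactly $1$ (your computation checks out: $\omega_{n-1}\,2^n\Gamma(\tfrac{n+1}{2})/(2(4\pi)^{(n-1)/2}\Gamma(\tfrac{n+1}{2}))\cdot\pi^{(n-1)/2}/\pi^{(n-1)/2}=1$). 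This buys two things: it needs neither integrality nor rectifiability, only that the density limit exists at the point in question, so it proves a more general statement; and Fatou silently disposes of the Gaussian tail, which in the paper's weak-convergence argument requires an implicit uniformity (e.g.\ the volume-growth bound coming from finite entropy) that is glossed over. What it gives up is the equality $\theta(x)=\lim_{s\to0}\int\rho_{x,s}\,d\mu$, but only the inequality is claimed. One small correction: in the manifold case you invoke the Li--Yau estimate ``under the nonnegative Ricci hypothesis,'' but the lemma assumes only that $M$ is compact, with no curvature condition; no such hypothesis is needed, since the Minakshisundaram--Pleijel near-diagonal expansion $\cH(x,x_0,s)=(4\pi s)^{-n/2}e^{-d(x,x_0)^2/(4s)}(1+O(s))$ holds uniformly on a small geodesic ball $B_\delta(x_0)$ of any compact manifold, and for a lower bound you may simply discard the contribution of $M\setminus B_\delta(x_0)$ and run your rescaling argument inside normal coordinates, where the volume distortion is $1+O(\delta^2)$ and disappears as $\delta\to0$.
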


\begin{proof}
	First let us consider the case that $M=\R^n$. Since $\mu$ is integral rectifiable, $\theta(x)$ just counts the multiplicity of the approximate tangent plane through $x$. Note the integral of the backward heat kernel on a hyperplane is $1$. Thus
	\[\theta(x)=\lim_{s\to 0}\int\frac{1}{(4\pi s)^{(n-1)/2}}e^{-\frac{|y-x|^2}{4s}}d\mu(y)\leq \lambda^{(0,T)}(\mu).\]
	
	If $M$ is a compact manifold, then the short time expansion of the heat kernels together with a similar argument as the case $M=\R^n$ show the same inequality.
\end{proof}

Now we focus on the measures associated with the parabolic Allen-Cahn equations. Taking supremum of the monotonicity formula among all $(y,s)$ leads to the monotonicity of entropy and local entropy:

\begin{corollary}\label{Cor:monotonicity of entropy-epsilon}
	Suppose $M=\R^n$ or $M$ is a compact manifold with non-negative sectional curvature and parallel Ricci curvature. Suppose $\mu^\epsilon_t$ is the measure associated with the parabolic Allen-Cahn equation on $M$. Suppose $\xi^\epsilon0\leq0$. For every $0\leq t_1\leq t_2$, we have
	\begin{equation}
		\lambda(\mu^\epsilon_{t_2})\leq\lambda(\mu^\epsilon_{t_1}).
	\end{equation}
	
	More generally, given $T>0$, we have
	\begin{equation}
		\lambda^{(0,T)}(\mu^\epsilon_{t_2})\leq\lambda^{(0,T+(t_2-t_1))}(\mu^\epsilon_{t_1}).
	\end{equation}
\end{corollary}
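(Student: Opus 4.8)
The plan is to derive both entropy monotonicity statements directly from the pointwise monotonicity formula in Theorem~\ref{thm:monotonicity of F-epsilon in M} by taking an appropriate supremum over the base point and scale $(y,s)$. Recall that the entropy and local entropy are defined (Definition~\ref{Def:entropy}, Definition~\ref{Def:local entropy}) as suprema of the Gaussian-weighted integral $\int \rho_{y,s}(x,0)\,d\mu(x)$, where $\rho_{y,s}(x,0)=\sqrt{4\pi s}\,\cH(x,y,s)$. The key observation is that evaluating the entropy of $\mu^\epsilon_{t_2}$ against the kernel $\rho_{y,s}(x,0)$ is the \emph{same} as evaluating $\mu^\epsilon_{t_2}$ against a backward heat kernel centered at a later time; so each test kernel entering the definition of $\lambda(\mu^\epsilon_{t_2})$ can be matched with a correspondingly time-shifted kernel entering the monotonicity formula at the earlier time $t_1$.

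Concretely, first I would fix $(y,s)\in M\times(0,\infty)$ and observe the time-translation identity
\begin{equation}\label{eq:shift}
\int \rho_{y,s}(x,0)\,d\mu^\epsilon_{t_2}(x)=\int \rho_{y,\,s+t_2}(t_2,x)\,d\mu^\epsilon_{t_2}(x),
\end{equation}
which holds because $\rho_{y,s'}(t,x)=\sqrt{4\pi(s'-t)}\,\cH(x,y,s'-t)$ depends only on the elapsed time $s'-t$, and $(s+t_2)-t_2=s$. Next, applying Theorem~\ref{thm:monotonicity of F-epsilon in M} with the fixed final scale parameter $s+t_2$ and the two times $t_1\leq t_2$ (legitimate since $t_2<s+t_2$ and $\xi^\epsilon_0\leq 0$), the right-hand side of \eqref{eq:shift} is bounded above by $\int \rho_{y,\,s+t_2}(t_1,x)\,d\mu^\epsilon_{t_1}(x)$. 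Undoing the shift, this last integral equals $\int \rho_{y,\,s+t_2-t_1}(0,x)\,d\mu^\epsilon_{t_1}(x)$, i.e. the weight $\rho_{y,s'}(x,0)$ with enlarged scale $s'=s+(t_2-t_1)$. Thus for every $(y,s)$,
\begin{equation}\label{eq:fixedys}
\int \rho_{y,s}(x,0)\,d\mu^\epsilon_{t_2}(x)\leq \int \rho_{y,\,s+(t_2-t_1)}(x,0)\,d\mu^\epsilon_{t_1}(x).
\end{equation}

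For the global statement I would take the supremum of \eqref{eq:fixedys} over all $(y,s)\in M\times(0,\infty)$: the left side becomes $\lambda(\mu^\epsilon_{t_2})$, while on the right, as $s$ ranges over $(0,\infty)$ the shifted scale $s+(t_2-t_1)$ ranges over $(t_2-t_1,\infty)\subset(0,\infty)$, so the supremum is at most $\lambda(\mu^\epsilon_{t_1})$, giving the first inequality. For the local statement I would instead restrict the supremum to $s\in(0,T)$; then $s+(t_2-t_1)$ ranges over $(t_2-t_1,\,T+(t_2-t_1))\subset(0,\,T+(t_2-t_1))$, so the right-hand supremum is controlled by $\lambda^{(0,\,T+(t_2-t_1))}(\mu^\epsilon_{t_1})$, which is exactly the claimed bound. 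The only point requiring care—and the main (if modest) obstacle—is making sure the scale bookkeeping in the shift \eqref{eq:shift} is consistent, namely that the entropy kernel at elapsed time $s$ really corresponds to running the monotonicity formula with final parameter $s+t_i$ at each time $t_i$; once that normalization is pinned down, both conclusions follow immediately by monotonicity in the scale together with the elementary set inclusions for the range of $s$.
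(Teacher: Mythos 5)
Your proposal is correct and is essentially the paper's own argument: both rest on the time-translation identity $\rho_{y,s'}(t,x)=\rho_{y,s'-t}(0,x)$, apply Theorem \ref{thm:monotonicity of F-epsilon in M} for a fixed kernel, and then track how the scale parameter shifts by $t_2-t_1$ when taking suprema (the paper parametrizes by the absolute final time $s\in(t_2,\infty)$ or $(t_2,t_2+T)$, while you parametrize by the elapsed scale $s\in(0,\infty)$ or $(0,T)$ and shift to $s+t_2$, which is the same change of variables). Your write-up simply makes explicit the bookkeeping that the paper's two-line proof leaves implicit.
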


\begin{corollary}\label{Cor:monotonicity of entropy}
	Suppose $M=\R^n$ or $M$ is a compact manifold with non-negative sectional curvature and parallel Ricci curvature. Suppose $\mu^\epsilon_t$ is the measure associated with the parabolic Allen-Cahn equation on $M$, and $\mu_t$ is the limit as $\epsilon\to 0$. For every $0<t_1\leq t_2$, we have
	\begin{equation}
		\lambda(\mu_{t_2})\leq\lambda(\mu_{t_1}).
	\end{equation}
	
	More generally, given $T>0$, we have
	\begin{equation}
		\lambda^{(0,T)}(\mu_{t_2})\leq\lambda^{(0,T+(t_2-t_1))}(\mu_{t_1}).
	\end{equation}
\end{corollary}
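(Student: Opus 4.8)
The plan is to deduce the corollary directly from the monotonicity formula for the limit measure, Theorem~\ref{thm:monotonicity of F in M}, by exploiting the time-translation structure of the backward heat kernel. The crucial observation is that $\rho_{y,s}(t,x)=\sqrt{4\pi(s-t)}\,\cH(x,y,s-t)$ depends on $s$ and $t$ only through the difference $s-t$, so that
\[\rho_{y,s}(t,x)=\rho_{y,s-t}(0,x).\]
In particular, the integrand defining the entropy, $\rho_{y,s}(0,x)$, is the same quantity that appears in the monotonicity formula once the scale parameter is shifted.

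First I would fix $(y,s)\in M\times(0,\infty)$ and apply Theorem~\ref{thm:monotonicity of F in M} with the shifted scale $\tilde s=s+t_2$ on the time interval $[t_1,t_2]$; the hypothesis $t_2<\tilde s$ is automatic since $s>0$. This gives
\[\int\rho_{y,\tilde s}(t_2,x)\,d\mu_{t_2}(x)\leq\int\rho_{y,\tilde s}(t_1,x)\,d\mu_{t_1}(x).\]
Using the identity above, the left side equals $\int\rho_{y,s}(0,x)\,d\mu_{t_2}(x)$ and the right side equals $\int\rho_{y,s+(t_2-t_1)}(0,x)\,d\mu_{t_1}(x)$, so that
\[\int\rho_{y,s}(0,x)\,d\mu_{t_2}(x)\leq\int\rho_{y,s+(t_2-t_1)}(0,x)\,d\mu_{t_1}(x).\]

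Next I would take the supremum over $(y,s)\in M\times(0,\infty)$. On the left this produces exactly $\lambda(\mu_{t_2})$. On the right, as $s$ ranges over $(0,\infty)$ the shifted parameter $s+(t_2-t_1)$ ranges over $(t_2-t_1,\infty)\subseteq(0,\infty)$, so the supremum of the right side is bounded above by the supremum over the full range, namely $\lambda(\mu_{t_1})$; this yields $\lambda(\mu_{t_2})\leq\lambda(\mu_{t_1})$. For the local statement the same argument applies verbatim, except that I track the scale interval: restricting $s\in(0,T)$ forces $s+(t_2-t_1)\in(t_2-t_1,\,T+(t_2-t_1))\subseteq(0,T+(t_2-t_1))$, which gives $\lambda^{(0,T)}(\mu_{t_2})\leq\lambda^{(0,T+(t_2-t_1))}(\mu_{t_1})$.

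Since this is a formal consequence of the monotonicity formula, there is no serious analytic obstacle; the only point requiring care is the bookkeeping of the scale shift $s\mapsto s+(t_2-t_1)$ and the verification that the shifted parameter stays within the admissible range of scales, which is precisely what accounts for the enlarged interval $(0,T+(t_2-t_1))$ in the local entropy bound. I would also note that the restriction $t_1>0$ (rather than $t_1\geq 0$) is inherited from Theorem~\ref{thm:monotonicity of F in M}, which only controls the limit measure for strictly positive times.
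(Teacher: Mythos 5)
Your proof is correct and is essentially the same as the paper's: both arguments rest on the identity $\rho_{y,s}(t,x)=\rho_{y,s-t}(0,x)$, apply the monotonicity formula of Theorem \ref{thm:monotonicity of F in M} pointwise in $(y,s)$, and then take suprema while tracking the shift of the scale parameter by $t_2-t_1$, which is exactly what produces the enlarged interval $(0,T+(t_2-t_1))$. The only cosmetic difference is that the paper parametrizes the supremum by $s\in(t_2,\infty)$ (resp.\ $s\in(t_2,t_2+T)$) rather than substituting $\tilde s=s+t_2$, and your closing remark about why $t_1>0$ is required is also consistent with the paper's hypotheses.
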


\begin{proof}
	Note $\rho_{y,s}(t,x)=\rho_{y,s-t}(0,x)$. Taking supremum among all $(y,s)\in\R^n\times(t_2,\infty)$ on the left hand side of \eqref{Eq:monotonicity of F-epsilon} and \eqref{Eq:monotonicity of F} gives us the monotonicity of entropy, and taking supremum among all $(y,s)\in\R^n\times(t_2,t_2+T)$ on the left hand side of \eqref{Eq:monotonicity of F-epsilon} and \eqref{Eq:monotonicity of F} gives us the monotonicity of local entropy.
\end{proof}

Together with Lemma \ref{Lem:equivalence of entropy and area growth}, Corollary \ref{Cor:monotonicity of entropy-epsilon} and Corollary \ref{Cor:monotonicity of entropy} imply the following uniform volume growth bound.

\begin{corollary}
	Suppose $M=\R^n$ or $M$ is a compact manifold with non-negative sectional curvature and parallel Ricci curvature. Suppose $\mu^\epsilon_t$ is the measure associated with the parabolic Allen-Cahn equation on $M$, and $\mu_t$ is the limit as $\epsilon\to 0$. Then
	\[\sup_{x\in M,R>0}\frac{\mu_t(B_R(x))}{R^{n-1}}\leq C \sup_{x\in M,R>0}\frac{\mu_\delta(B_R(x))}{R^{n-1}},\quad \forall t\geq \delta>0.\]
	If we further assume $\xi^\epsilon_0\leq 0$, then
	\[\sup_{x\in M,R>0}\frac{\mu^\epsilon_t(B_R(x))}{R^{n-1}}\leq C \sup_{x\in M,R>0}\frac{\mu^\epsilon_0(B_R(x))}{R^{n-1}}.\]
	Here $C$ is a constant depending on $M$ but does not depend on time $t$.
\end{corollary}

\section{Unit Density of Limit Measure}
Let $\mu_t$ be the limit of the measures which is associated with the parabolic Allen-Cahn equation. Recall that 

\[\alpha=\int_{-1}^1 \sqrt{F(s)/2}ds,\]

and we say $\mu_t$ has unit density if $\theta(x)=\alpha$ for almost all time and almost all $x$ in the support of $\mu_t$. Note $\alpha$ appears because of the nature of the Allen-Cahn equation. In this section we prove the main theorem.

\begin{proof}[Proof of Theorem \ref{Thm:low entropy imply unit density}]
	By Corollary \ref{Cor:monotonicity of entropy}, $\lambda^{(0,T-t)}(\mu_t)\leq \lambda^{(0,T)}(\mu_0)<2\alpha$.   Then Lemma \ref{Lem:density is bounded by entropy} implies that the density of $\mu_t$ is strictly less than $2\alpha$. Tonegawa \cite{Tonegawa03} proved that $\alpha^{-1}\mu_t$ is integral, thus $\mu_t$ has density $\alpha$, i.e. $\mu_t$ has unit density.
\end{proof}

\begin{proof}[Proof of Theorem \ref{Thm:Main Theorem}]
	By the lower semi-continuity of entropy and local entropy, $\lambda^{(0,T)}(\mu_0^\epsilon)<2\alpha-\kappa$ implies that $\lambda^{(0,T)}(\mu_0)\leq 2\alpha-\kappa<2\alpha$. Then Theorem \ref{Thm:low entropy imply unit density} implies that $\mu_t$ has unit density when $t\in[0,T)$. 
\end{proof}
\subsection*{Acknowledgments}
I want to thank Professor Bill Minicozzi and Christos Mantoulidis for helpful conversations.

\end{document}